\renewcommand{\bar}[1]{\overline{#1}}
\newcommand{\ZZ}{\mathbb{Z}}
\newcommand{\NN}{\mathbb{N}}
\newcommand{\Egr}[1][\Asf]{E_{#1}}
\newcommand{\g}{\mathbf{g}}
\newcommand{\G}{\mathcal{G}}
\newcommand{\SAB}{\mathcal{S}_{\Asf,\Bsf}}
\newcommand{\HG}{\mathcal{H}}
\newcommand{\RG}{\mathcal{R}}
\newcommand{\un}{\mathbf{u}}
\newcommand{\Asf}{\mathsf{A}}
\newcommand{\Bsf}{\mathsf{B}}
\newcommand{\Ide}{\mathsf{I}}
\newcommand{\Ind}{\mathbf{1}}
\newcommand{\coker}{\operatorname{coker}}
\newtheorem{lemma}{Lemma}[section]
\newtheorem{corollary}[lemma]{Corollary}
\newtheorem{theorem}[lemma]{Theorem}
\newtheorem{proposition}[lemma]{Proposition}
\theoremstyle{definition}
\newtheorem{definition}[lemma]{Definition}
\newtheorem{example}[lemma]{Example}
\newtheorem{remark}[lemma]{Remark}
\newtheorem*{theorem*}{Theorem}
\title[The homology of the Katsura-Exel-Pardo groupoid]{The homology of the Katsura-Exel-Pardo groupoid}
 \author{Eduard Ortega}
 \address{Department of Mathematical Sciences\\NTNU\\NO-7491 Trondheim\\Norway}
 \email{eduard.ortega@ntnu.no}
\date{\today}
\subjclass[2010]{Primary 22A22; Secondary 37A55, 46L55}
\keywords{Étale Groupoid, Homology, K-theory, Kirchberg algebras.}
\begin{document}
\maketitle
 
 \begin{abstract}
We compute the homology of the groupoid associated to the Katsura algebras, and show that they capture the $K$-theory of the $C^*$-algebras in the sense of the (HK) conjecture posted by Matui.  Moreover, we show that several classifiable simple $C^*$-algebras are groupoid $C^*$-algebras of this class.
 \end{abstract}
 
\section*{Introduction} 

In \cite{Kat} Katsura defined a nice class of $C^*$-algebras that exhausts all the Kirchberg algebras in the UCT class. The construction of these $C^*$-algebras has two layers: the first is the graph skeleton, that gives to the $C^*$-algebra most of the  desired structural properties, and the second layer that consists of partial unitaries associated to every vertex, which provide the necessary richness in $K$-theory. These two layers are given by two equal size square matrices $\Asf$ and $\Bsf$.  

Later in \cite{EP} Exel and Pardo, while studying the $C^*$-algebras associated to self-similar graphs,  realized that the $C^*$-algebras constructed by Katsura  were prominent examples of this construction. The advantage of Exel and Pardo approach is that  they described these algebras as groupoids $C^*$-algebras of combinatorial origin, and managed to give beautiful characterizations of the most fundamental properties of groupoids. In particular, for the Katsura algebras they construct an amenable groupoid $\G_{\Asf,\Bsf}$ such that $C^*(\G_{\Asf,\Bsf})$ is the desired $C^*$-algebra and give conditions, in most of the cases equivalent conditions, in terms of the matrices $\Asf$ and $\Bsf$ for Hausdorffness, effectiveness and minimality of the groupoid. Because Katsura found the $C^*$-algebra, but Exel and Pardo gave the description as a grupoid $C^*$-algebras, we choose to call   $\G_{\Asf,\Bsf}$ the \emph{Katsura-Exel-Pardo groupoid}.

As mentioned above, Katsura computed the $K$-theory of $C^*(\G_{\Asf,\Bsf})$ in terms of the matrices $\Asf$, $\Bsf$, that is 
$$K_0(C^*(\G_{\Asf,\Bsf}))\cong \coker(\Ide-\Asf)\oplus \ker (\Ide-\Bsf)\qquad\text{and} \qquad K_1(C^*(\G_{\Asf,\Bsf}))\cong \coker(\Ide-\Bsf)\oplus \ker (\Ide-\Asf)\,,$$
and showed that given any two countably generated abelian groups $G_0$ and $G_1$ there exist matrices $\Asf$ and $\Bsf$ such that $K_0(C^*(\G_{\Asf,\Bsf}))\cong G_0$ and $K_1(C^*(\G_{\Asf,\Bsf}))\cong G_1$.

In \cite{Mat2,Mat3} Matui started an exhaustive study of étale groupoids with totally disconnected unit space, and showed how their homology reflects dynamical properties of their topological full groups. He later conjectured in \cite{Mat4} that the homology groups of a minimal effective, étale  groupoid totally captures the $K$-theory of their associated reduced groupoid $C^*$-algebra, and called it the (HK) conjecture. He verified that the (HK) conjecture is true for important classes of groupoids, like the transformation groupoids of Cantor minimal systems, Cuntz-Krieger groupoids and products of Cuntz-Krieger groupoids.

In the present paper, we verify the conjecture for the class of Katsura-Exel-Pardo groupoids, that is, we compute all the homology groups of the groupoid $\G_{\Asf,\Bsf}$, and show that they sum up to the $K$-theory of the $C^*$-algebra. Furthermore, we see that homology groups provide a refinement of the $K$-theory allowing us to define invariants for the Kakutani equivalence class of the groupoid $\G_{\Asf,\Bsf}$ that could not be found just looking at the $K$-theory of the associated $C^*$-algebras. 
It was proved by Matsumoto and Matui \cite[Corollary 3.8]{MM} that given two irreducible matrices $\Asf$ and $\Asf'$, the Cuntz-Krieger groupoids $\G_{\Asf,0}$ and $\G_{\Asf',0}$ are equivalent if and only if $\coker (\Ide-\Asf)\cong \coker (\Ide-\Asf')$ and $\det(\Ide-\Asf)=\det(\Ide-\Asf')$. It then looks natural to go for a classification result for the Katsura-Exel-Pardo groupoids. Then we obtain
the following Main Theorem of the present paper.

\begin{theorem*}
Let $N\in \NN\cup\{\infty\}$, and let $\Asf$ and $\Bsf$ be two $N\times N$ row-finite matrices with integer entries, and such that $\Asf_{i,j}\geq 0$ for all $i$  and $j$. Moreover, suppose that  $\Bsf_{i,j}=0$ if and only if $\Asf_{i,j}=0$. Then 
\begin{align*}
H_0(\G_{\Asf,\Bsf})& \cong \coker (\Ide-\Asf) & \qquad & H_1(\G_{\Asf,\Bsf}) \cong \ker(\Ide-\Asf)\oplus \coker (\Ide-\Bsf) \\
H_2(\G_{\Asf,\Bsf})& \cong \ker (\Ide-\Bsf)\,, &\qquad & H_{i}(\G_{\Asf,\Bsf}) =0\text{ for }i\geq 3\,.
\end{align*}
Therefore, $\G_{\Asf,\Bsf}$ satisfies the $(HK)$ conjecture.
\end{theorem*}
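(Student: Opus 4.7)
The plan is to realize $\G_{\Asf,\Bsf}$ as a skew product of a simpler (Cuntz--Krieger) groupoid by $\ZZ$ and apply Matui's Hochschild--Serre type spectral sequence for étale groupoid extensions. The Exel--Pardo description of $\G_{\Asf,\Bsf}$ naturally equips it with a continuous cocycle $c\colon\G_{\Asf,\Bsf}\to\ZZ$ encoding the self-similar $\ZZ$-component of the data; let $\HG:=c^{-1}(0)$ be its clopen kernel. Since $\ZZ$ has cohomological dimension~$1$, the spectral sequence
\[
E^{2}_{p,q}=H_{p}(\ZZ,H_{q}(\HG))\;\Longrightarrow\;H_{p+q}(\G_{\Asf,\Bsf})
\]
collapses to a Wang long exact sequence
\[
\cdots\to H_{n}(\HG)\xrightarrow{\,1-\alpha_{*}\,}H_{n}(\HG)\to H_{n}(\G_{\Asf,\Bsf})\to H_{n-1}(\HG)\to\cdots,
\]
where $\alpha_{*}$ is the endomorphism of $H_{*}(\HG)$ induced by a generator of $\ZZ$.

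The next step is to identify $\HG$ explicitly: in our setting $\HG$ should be (Morita equivalent to) the Cuntz--Krieger groupoid associated to $\Asf$. By Matui's computations \cite{Mat2,Mat3} this then yields
\[
H_{0}(\HG)\cong\coker(\Ide-\Asf),\qquad H_{1}(\HG)\cong\ker(\Ide-\Asf),\qquad H_{n}(\HG)=0\text{ for }n\geq 2.
\]
The vanishing of $H_{n}(\HG)$ for $n\geq 2$ forces $H_{n}(\G_{\Asf,\Bsf})=0$ for $n\geq 3$ directly from the Wang sequence, so the remaining work concerns $n=0,1,2$.

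The technical heart of the argument is the computation of $\alpha_{*}$. Unpacking the composition law from \cite{EP}, a generator of $\ZZ$ acts on the underlying ``column'' copies of $\ZZ^{N}$ by multiplication by $\Bsf$. Exploiting the standing hypothesis $\Bsf_{i,j}=0\Leftrightarrow\Asf_{i,j}=0$, one expects $\alpha_{*}$ to restrict to the identity on $\coker(\Ide-\Asf)$ and to act on $\ker(\Ide-\Asf)$ in such a way that $1-\alpha_{*}$ coincides, after a natural identification, with the restriction of $\Ide-\Bsf$. Feeding this into the Wang sequence gives $H_{0}(\G_{\Asf,\Bsf})\cong\coker(\Ide-\Asf)$ and $H_{2}(\G_{\Asf,\Bsf})\cong\ker(\Ide-\Bsf)$, together with a short exact sequence
\[
0\to\ker(\Ide-\Asf)\to H_{1}(\G_{\Asf,\Bsf})\to\coker(\Ide-\Bsf)\to 0
\]
which splits because $\ker(\Ide-\Asf)$ is free abelian, matching the claimed formula for $H_{1}$.

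The main obstacle will be the chain-level identification of $\alpha_{*}$ and the algebraic verification that the relevant kernels and cokernels of $1-\alpha_{*}$ are exactly $\ker(\Ide-\Bsf)$ and $\coker(\Ide-\Bsf)$; the interplay between $\Asf$ and $\Bsf$ on $H_{*}(\HG)$, controlled by the zero-pattern hypothesis, will need a careful case-by-case argument, and one may need to replace $\HG$ by an AF-subgroupoid together with a $\ZZ^{2}$-valued cocycle and run the analogous Koszul calculation. Once the homology has been computed, the $(HK)$ conjecture is an immediate tally against Katsura's $K$-theory formulas recalled in the introduction: $K_{0}\cong\coker(\Ide-\Asf)\oplus\ker(\Ide-\Bsf)\cong H_{0}(\G_{\Asf,\Bsf})\oplus H_{2}(\G_{\Asf,\Bsf})$ and $K_{1}\cong\coker(\Ide-\Bsf)\oplus\ker(\Ide-\Asf)\cong H_{1}(\G_{\Asf,\Bsf})$.
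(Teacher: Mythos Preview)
Your overall strategy---use a $\ZZ$-cocycle on $\G_{\Asf,\Bsf}$ and the resulting Wang long exact sequence---is exactly the paper's. The gap is in the identification of the kernel groupoid $\HG$, and this derails the computation. First, there is no continuous homomorphism $c\colon\G_{\Asf,\Bsf}\to\ZZ$ that ``reads off the self-similar $\ZZ$-component'' $m$: under the equivalence relation defining the groupoid one has $(\alpha,m,\beta;x)\sim(\alpha\kappa_m(\gamma),\varphi(m,\gamma),\beta\gamma;x)$ with $\varphi(m,\gamma)\neq m$ in general, so $[\alpha,m,\beta;x]\mapsto m$ is not well defined. The only natural cocycle is the length cocycle $\rho([\alpha,m,\beta;x])=|\alpha|-|\beta|$, and this is the one the paper uses. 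Its kernel $\HG_{\Asf,\Bsf}$ is \emph{not} Morita equivalent to the Cuntz--Krieger groupoid $\G_{\Asf,0}$: it still contains all the elements $[\alpha,m,\alpha;x]$ with $m\neq 0$. The paper shows instead that $H_0(\HG_{\Asf,\Bsf})\cong\varinjlim(\ZZ^{N},\Asf)$ and $H_1(\HG_{\Asf,\Bsf})\cong\varinjlim(\ZZ^{N},\Bsf)$, which are quite different from $\coker(\Ide-\Asf)$ and $\ker(\Ide-\Asf)$.

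With your proposed identification the action you describe would not produce the right groups either: if $\alpha_*$ were the identity on $H_0(\HG)\cong\coker(\Ide-\Asf)$ then $\ker(1-\alpha_*)$ on $H_0$ would be all of $\coker(\Ide-\Asf)$, forcing this group (rather than $\ker(\Ide-\Asf)$) to appear as the quotient of $H_1(\G_{\Asf,\Bsf})$ in the Wang sequence; and there is no reason multiplication by $\Bsf$ should preserve $\ker(\Ide-\Asf)\subseteq\ZZ^N$, so the purported action on $H_1(\HG)$ is not even defined. What is actually needed is the two-layer structure the paper exploits: $\HG_{\Asf,\Bsf}$ is a nested union of open subgroupoids $\HG_{\Asf,\Bsf,n}=\{[\alpha,m,\beta;x]:|\alpha|=|\beta|=n\}$, and on each of \emph{these} the $m$-component \emph{is} a well-defined cocycle $\eta_n$ to $\ZZ$ with AF kernel. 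This forces $H_i(\HG_{\Asf,\Bsf})=0$ for $i\geq 2$ and yields the explicit direct-limit descriptions of $H_0$ and $H_1$. The induced map $\hat\rho^1$ is then the shift in each direct limit, and a standard lemma (the paper cites \cite{Rae}) identifies $\ker$ and $\coker$ of $\Ide-\hat\rho^1$ with $\ker(\Ide-\Asf)$, $\coker(\Ide-\Asf)$ on $H_0$ and with $\ker(\Ide-\Bsf)$, $\coker(\Ide-\Bsf)$ on $H_1$.
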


  In the Cuntz-Krieger case, the part of the invariant involving the determinant is contained in the first cohomology group of the groupoid, which is isomorphic to the Boyle-Handelman group \cite[Proposition 3.4]{MM}, while the cohomology of the Katsura-Exel-Pardo groupoid is much bigger and contains parts of the Boyle-Handelman group. So further study of this group is needed.  It is then the aim of this paper to set the first step in a future classification of the groupoids $\G_{\Asf,\Bsf}$ analyzing the combinatorial structure that they possess. 

Recently it has been a big interest in finding which classifiable $C^*$-algebras can be realized as étale groupoid $C^*$-algebras  (see \cite{Li,  LiRe,Put}). In order to do that one wants to construct groupoids  whose associated $C^*$-algebras exhaust the possible  Elliott invariants. Here is where étale groupoids satisfying the (HK) conjecture gain importance, since in general $K$-theory is an important part of this invariant.  

The paper is organized as follows. In section $1$ we give the preliminaries on étale groupoids and their homology. Here is where we state Lemma \ref{exact_seq}, that is the analog of the Pimsner-Voiculescu 6-terms exact sequence of $K$-theory, but for the homology of étale groupoids with a $\ZZ$-cocyle. This Lemma will be the crucial technical tool  for the computation of the homology of the Katsura-Exel-Pardo groupoid. In section $2$ we introduce the Katsura-Exel-Pardo groupoid, that is, a groupoid associated to a self-similar graph introduced by Exel and Pardo in \cite{EP} that realizes the $C^*$-algebra defined by Katsura \cite{Kat}. After a quick overview of the basics properties of this groupoid found in \cite[Section 18]{EP}, we move to the computation of the homology.  This is done in two steps: the first computes the homology of the kernel groupoid of the natural $\ZZ$-cocyle of $\G_{\Asf,\Bsf}$, denoted by $\HG_{\Asf,\Bsf}$. The second step is to use the long exact sequence found in  Lemma \ref{exact_seq} to compute the homology of $\G_{\Asf,\Bsf}$. This long exact sequence contains the homology groups of $\HG_{\Asf,\Bsf}$ with maps induced by the dual action of the $\ZZ$-cocyle. Thanks to the nice description of these maps given in Proposition \ref{trans_map} and the nature of the homology groups of $\HG_{\Asf,\Bsf}$ the homology groups of $\G_{\Asf,\Bsf}$ fits in short exact sequences and hence be computed. Finally in section $3$ we use Theorem \ref{HK_conj} to construct a variety of étale groupoids whose associated $C^*$-algebra fall in a classifiable class and with a prescribed $K$-theoretical invariant.

\section{Basics on groupoid homology.}

In this section we will recall the basic definitions and results on groupoid homology that one can find in \cite{Mat2}, and we will state the conjecture of study in this paper.

A \emph{groupoid} is a small category of isomorphisms, that is, a set $\G$ (the morphisms, or arrows in the category) equipped with a partially defined multiplication $(g_1, g_2) \mapsto g_1 \cdot g_2$ for a distinguished subset $\G^{(2)} \subseteq \G \times \G$, and everywhere defined involution $g \mapsto g^{-1}$ satisfying the following axioms:
\begin{enumerate}
	\item If $g_1g_2$ and $(g_1g_2)g_3$ are defined, then $g_2g_3$ and $g_1(g_2g_3)$ are defined and $(g_1g_2)g_3=g_1(g_2g_3)$,
	\item The products $gg^{-1}$ and $g^{-1}g$ are always defined. If $g_1g_2$ is defined, then $g_1=g_1g_2g_2^{-1}$ and $g_2=g_1^{-1}g_1g_2$.
\end{enumerate}

A \emph{topological groupoid} is a groupoid together with a topology on it such that the operations of multiplication and taking inverse are continuous. 

The elements of the form $gg^{-1}$ are called \emph{units}. We denote the set of units of a groupoid $\G$ by $\G^{(0)}$, and refer to this as the \emph{unit space}. We always think of the unit space as a topological space equipped with the relative topology from $\mathcal{G}$. The \emph{source} and \emph{range} maps are
$$s(g) := g^{-1}g\qquad\text{and}\qquad r(g) := gg^{-1}$$ 
for $g\in \G$.

An \emph{étale} groupoid is a topological groupoid where the range map (and necessarily the source map) is a local homeomorphism (as a map from $\mathcal{G}$ to $\mathcal{G}$). The unit space $\mathcal{G}^{(0)}$ of an étale groupoid is always an open subset of $\mathcal{G}$.

\begin{definition}
	Let $\mathcal{G}$ be an étale groupoid. A \emph{bisection} is an open subset $U\subseteq \G$ such that $s$ and $r$ are both injective when restricted to $U$. 
\end{definition}

Two units $x,y\in \G^{(0)}$ belong to the same \emph{$\G$-orbit} if there exists $g\in \G$ such that $s(g)=x$ and $r(g)=y$.  We denote by $\text{orb}_{\G}(x)$ the $\G$-orbit of $x$. When every $\G$-orbit is dense in $\G^{(0)}$, $\G$ is called \emph{minimal}. An open set $A$ is called \emph{$\G$-full} if for every $x\in \G^{(0)}$ one has $\text{orb}_{\G}(x)\cap A\neq \emptyset$.

For an open subset $A\subseteq \G^{(0)}$ we denote by $\G_{A}$ the subgroupoid $\{g\in \G \mid s(g),r(g)\in A \}$, called the \emph{restriction} of $\G$ to $A$. When $\mathcal{G}$ is étale, the restriction $\G_{A}$ is an open étale subgroupoid.

The \emph{isotropy group} of a unit $x\in \G^{(0)}$ is the group $\G_x^x := \{g\in \G \mid s(g)=r(g)=x\}$, and the \emph{isotropy bundle} is
\[\G' := \{g\in \G \mid s(g)=r(g)\} = \bigcup_{x \in \mathcal{G}^{(0)}} \G_x^x.\]
A groupoid $\G$ is said to be \emph{principal} if all isotropy groups are trivial, or equivalently, $\G' = \mathcal{G}^{(0)}$. We say that $\G$ is \emph{effective} if the interior of $\G'$ equals $\mathcal{G}^{(0)}$.

\begin{definition}
	We say that a groupoid whose unit space is totally disconnected is \emph{elementary} if it is compact and principal. A groupoid $\G$ is an $AF$ groupoid if there exists an ascending chain of open elementary subgroupoids $K_1,K_2,\ldots$ such that $\G=\bigcup_{i=1}^\infty K_i$.  
\end{definition}

 Let $\pi:X\to Y$ be a local homeomorphism  between two locally compact Hausdorff spaces, then given any $f\in C_c(X,\ZZ)$ we define 
$$\pi_*(f)(y):=\sum_{\pi(x)=y}f(x)\,.$$
It is not hard to show that $\pi_*(f)\in C_c(Y,\ZZ)$. 

Given an étale groupoid $\G$ and $n\in\NN$ we write $\G^{(n)}$ for the space of composable strings of $n$ elements in $\G$ with the product topology. For $i=0,\ldots,n$, we let $d_i:\G^{(n)}\to\G^{(n-1)}$ be a map defined by 
$$d_i(g_1,g_2,\ldots,g_n)=\left\lbrace \begin{array}{ll}
(g_2,g_3,\ldots,g_n) & \text{if }i=0\,, \\
(g_1,\ldots, g_{i}g_{i+1},\ldots, g_n) & \text{if }1\leq i\leq n-1\,, \\
(g_1,g_2,\ldots,g_{n-1})& \text{if }i=n\,.
\end{array}\right. $$
  
  Then we define the homomorphism $\delta_n:C_c(\G^{(n)},\ZZ)\to C_c(\G^{(n-1)},\ZZ)$ given by
  $$\delta_1=s_*-r_*\qquad\text{and}\qquad\delta_n=\sum_{i=0}^n(-1)^nd_{i*}\,.$$
  Then we define the homology $H_*(\G)$ as the homology groups of the chain complex $C_\bullet(\G,\ZZ)$ by
  $$0\longleftarrow C_c(\G^{(0)},\ZZ)\longleftarrow^{\delta_1} C_c(\G^{(1)},\ZZ)\longleftarrow^{\delta_2} C_c(\G^{(2)},\ZZ)\longleftarrow\cdots$$

The following conjecture, posted in \cite{Mat4}, states that the homology of the groupoid refines the $K$-theory of the reduced groupoid $C^*$-algebra.
\newline

\textbf{(HK) conjecture:} Let $\G$ be a minimal, effective, étale groupoid with $\G^{(0)}$ homeomorphic to the Cantor space. Then 
$$K_i(C^*_r(\G))\cong\bigoplus_{n=0}^\infty H_{2n+i}(\G)\,,\qquad\text{for }i=0,1\,.$$

The conjecture was confirmed for the $AF$-groupoids, transformation groupoids of Cantor minimal systems, groupoids of shifts of finite type and products of groupoids of shifts of finite type (see  \cite{Mat2,Mat4}).

Now we are going to collect some results from \cite{Mat2} that will allow us to compute the homology of a groupoid.

Let $\Gamma$ be a countable discrete group and $\G$ an étale groupoid. When $\rho:\G\to\Gamma$ is a groupoid homomorphism, the \emph{skew product} $\G\times_\rho\Gamma$ is $\G\times\Gamma$ with the following groupoid structure: $(g,\gamma)$ and $(g',\gamma')$ are composable if and only if $g$ and $g'$ are composable and $\gamma\rho(g)=\gamma'$, and 
$$(g,\gamma)\cdot(g',\gamma\rho(g))=(gg',\gamma)\qquad\text{and}\qquad(g,\gamma)^{-1}=(g^{-1},\gamma\rho(g))\,.$$
Given $n\in\NN$ we can define the action $\hat{\rho}:\Gamma\curvearrowright(\G\times_\rho \Gamma)^{(n)}$ by 
$$\hat{\rho}^\gamma((g'_1,\gamma'_1),\ldots,(g'_n,\gamma'_n))=((g'_1,\gamma\gamma'_1),\ldots,(g'_n,\gamma\gamma'_n))\,.$$

Two étale groupoids $\G$ and $\HG$ with totally disconnected unit spaces are called \emph{Kakutani equivalent} if there exist full clopen subsets $A$ and $B$ of $\G^{(0)}$ and $\HG^{(0)}$ respectively, such that $\G_A\cong \HG_B$ (\cite[Definition 4.1]{Mat2}). It was proved in \cite[Theorem 3.6(2)]{Mat2} that two Kakutani equivalent groupoids have isomorphic homology groups.

In order to compute the homology of groupoids $\G$ that have a groupoid homomorphism $\rho:\G\to\ZZ$, Matui uses the spectral sequence  
$$E^2_{p,q}=H_p(\ZZ,H_q(\G\times_\rho\ZZ))\Rightarrow H_{p+q}(\G)\,.$$
However, we are going to use a long exact sequence that relates the homology groups of $\G$ and $\G\times_\rho\ZZ$. This sequence might be known to experts but since I could not find any reference we state it for completeness.  I would like to thank Jamie Gabe for suggesting me the use of this long exact sequence.

 \begin{lemma}\label{exact_seq}
 Let $\G$ be an étale groupoid with $\G^{(0)}$ a locally compact, Hausdorff  and totally disconnected space, and let $\rho:\G\to\ZZ$ be a group homomorphism. Then there exists a long exact sequence
 $$\xymatrix {  0  &  H_0(\G)\ar@{>}[l] & H_0(\G\times_\rho\ZZ) \ar@{>}[l] & H_0(\G\times_\rho\ZZ)\ar@{>}[l]_{\Ide-\hat{\rho}^1} & H_1(\G)\ar@{>}[l] & \cdots\ar@{>}[l] \\ \cdots  &  H_n(\G)\ar@{>}[l] & H_n(\G\times_\rho\ZZ) \ar@{>}[l] & H_n(\G\times_\rho\ZZ)\ar@{>}[l]_{\Ide-\hat{\rho}^1} & H_{n+1}(\G)\ar@{>}[l] & \cdots\ar@{>}[l] }\,,$$
 where $\hat{\rho}$ is the induced map by the action $\hat{\rho}:\ZZ\curvearrowright\G\times_\rho \ZZ$.
  \end{lemma}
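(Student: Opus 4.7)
The plan is to construct a short exact sequence of chain complexes and deduce the long exact sequence via the standard zig-zag lemma. Let $\pi:\G\times_\rho\ZZ\to\G$ be the projection onto the first coordinate. Because $\ZZ$ is discrete, $\pi$ is a local homeomorphism, and it induces local homeomorphisms $(\G\times_\rho\ZZ)^{(n)}\to\G^{(n)}$ on composable strings; since $\pi$ is also a groupoid homomorphism, it commutes with the face maps $d_i$, so the push-forwards $\pi_*$ assemble into a chain map $C_\bullet(\G\times_\rho\ZZ,\ZZ)\to C_\bullet(\G,\ZZ)$. Each $\hat{\rho}^1$ is a groupoid automorphism of $\G\times_\rho\ZZ$ commuting with the $d_i$, so $\hat{\rho}^1_*$ is a chain endomorphism.

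The key observation is that the cocycle condition $\gamma_{i+1}=\gamma_i\rho(g_i)$ on composable strings in the skew product yields a natural homeomorphism $(\G\times_\rho\ZZ)^{(n)}\cong\G^{(n)}\times\ZZ$ via $((g_1,\gamma_1),\ldots,(g_n,\gamma_n))\mapsto((g_1,\ldots,g_n),\gamma_1)$. Under this identification $\hat{\rho}^1_*$ becomes translation by one in the $\ZZ$ coordinate and $\pi_*(f)(x)=\sum_{\gamma\in\ZZ}f(x,\gamma)$. Consequently $C_c((\G\times_\rho\ZZ)^{(n)},\ZZ)\cong C_c(\G^{(n)},\ZZ)\otimes_\ZZ\ZZ[\ZZ]$ as $\ZZ[\ZZ]$-modules, with $\hat{\rho}^1_*$ acting as multiplication by the generator $t$ and $\pi_*$ identifying with the augmentation $\mathrm{id}\otimes\varepsilon$, where $\varepsilon:\ZZ[\ZZ]\to\ZZ$ sends $\sum a_kt^k$ to $\sum a_k$.

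From this I would establish the short exact sequence of chain complexes
$$0\longrightarrow C_c((\G\times_\rho\ZZ)^{(n)},\ZZ)\xrightarrow{\Ide-\hat{\rho}^1_*} C_c((\G\times_\rho\ZZ)^{(n)},\ZZ)\xrightarrow{\pi_*} C_c(\G^{(n)},\ZZ)\longrightarrow 0$$
for every $n$. Injectivity of $\Ide-\hat{\rho}^1_*$ holds because a compactly supported $\ZZ$-translation-invariant function must vanish; surjectivity of $\pi_*$ holds because $\pi_*(\Ind_{V\times\{0\}})=\Ind_V$ for every compact open bisection $V\subseteq\G^{(n)}$ and such characteristic functions generate $C_c(\G^{(n)},\ZZ)$; and middle exactness reduces, via the tensor decomposition above, to the classical identity $\ker\varepsilon=(1-t)\ZZ[\ZZ]$. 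The standard zig-zag lemma then yields the claimed long exact sequence, with the middle arrow labelled $\Ide-\hat{\rho}^1$ being exactly the map induced on homology by $\Ide-\hat{\rho}^1_*$.

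The main obstacle is elementary but demands careful bookkeeping: one must check that $\pi$ and $\hat{\rho}^1$ are compatible with all simplicial face maps $d_i$ so that the horizontal arrows above are genuine chain maps, and that the free $\ZZ[\ZZ]$-module decomposition of $C_c((\G\times_\rho\ZZ)^{(n)},\ZZ)$ really holds. Both follow from the definition of composition in the skew product together with the hypothesis that $\G^{(0)}$ is Hausdorff, locally compact, and totally disconnected, which provides the required supply of compact open bisections to decompose arbitrary elements of $C_c$. Once these verifications are in place the conclusion is purely homological algebra.
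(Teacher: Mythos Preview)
Your proposal is correct and follows essentially the same architecture as the paper: both construct the short exact sequence $0\to C_\bullet(\G\times_\rho\ZZ,\ZZ)\xrightarrow{\Ide-\hat{\rho}^1_*}C_\bullet(\G\times_\rho\ZZ,\ZZ)\xrightarrow{\pi_*}C_\bullet(\G,\ZZ)\to 0$ and invoke the zig-zag lemma. Your verification of middle exactness via the identification $C_c((\G\times_\rho\ZZ)^{(n)},\ZZ)\cong C_c(\G^{(n)},\ZZ)\otimes_\ZZ\ZZ[\ZZ]$ and the classical fact $\ker\varepsilon=(1-t)\ZZ[\ZZ]$ is more streamlined than the paper's direct construction of a preimage using clopen partitions, but the underlying idea is the same.
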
 
  \begin{proof}
  Let $\hat{\rho}^1:\G\times_\rho\ZZ\to \G\times_\rho\ZZ$ given by $g\times\{i\}\mapsto g\times\{i+1\}$ for $g\in \G$ and $i\in\ZZ$, then we define the  short exact sequence
\begin{equation}\label{SES}
\xymatrix {  0 \ar@{>}[r]  &  C_\bullet(\G\times_\rho\ZZ,\ZZ)\ar@{>}[r]^{\Ide-\hat{\rho}^1} & C_\bullet(\G\times_\rho\ZZ,\ZZ) \ar@{>}[r]^{\hat{\pi}} & C_\bullet(\G,\ZZ)\ar@{>}[r] & 0 }\,,
\end{equation}
where $\hat{\pi}$ is induced by the map $\pi:(\G\times_\rho\ZZ)^{(n)} \to  \G^{(n)}$ given by  $$((g_1\times\{i_1\}),\ldots, (g_n\times\{i_n\}))\mapsto (g_1,\ldots,g_n)\,.$$ It is clear that $\Ide-\hat{\rho}^1$ is an injective map, $\hat{\pi}$ is a surjective map and that $\text{im }(\Ide-\hat{\rho}^1)\subseteq \ker \hat{\pi}$. So it is enough to check that $\text{im }\Ide-\hat{\rho}^1\supseteq \ker \hat{\pi}$. Let $f\in C_c((\G\times_\rho \ZZ)^{(n)},\ZZ)$ such that $\hat{\pi}(f)=0$. Observe that we can write $f=\sum_{i=-m}^m f_i$ where $f_i\in C_c(\G^{(n)}\times\{i\},\ZZ)$ such that $\sum_{i=-m}^m\hat{\pi}(f_i)=0$. Let $A$ be the compact support of $f$, and let $B:=\pi(A)$ a compact subset of $\G^{(n)}$. Let $B_1,\ldots,B_k$ be a clopen partition of $B$ such that $(f_i)_{|B_j\times\{i\}}$ is constant for every $-m\leq i\leq m$, so let $\lambda_{i,j}$ be the integer number such that   $(f_i)_{|B_j\times\{i\}}=\lambda_{i,j}$. But then for every $1\leq j\leq k$ we have that $\sum_{i=-m}^m\lambda_{i,j}=0$. We can write $f=\sum_{j=1}^k\sum_{i=-m}^m \lambda_{i,j}\Ind_{B_j\times\{i\}}$, where $\sum_{i=-m}^m\lambda_{i,j}=0$. For any clopen $C$ of $\G^{(n)}$ and $i< j$ we define the function $\g_{i,j,C}:=\sum_{k=i}^{j-1}\Ind_{C\times\{k\}}$. The function $$h:=\sum_{j=1}^k\left( \sum_{i=-m}^{-1} \lambda_{i,j}\g_{i,0,B_j}-\sum_{i=0}^{m-1} \lambda_{i,j}\g_{0,i+1,B_j}\right)\in C_c((\G\times_\rho \ZZ)^{(n)},\ZZ)\,,$$
is such that 
\begin{align*}
(\Ide-\hat{\rho}^1)(h) & =\sum_{j=1}^k\left( \sum_{i=-m}^{-1} \lambda_{i,j}\left( \Ind_{B_j\times\{i\}}-\Ind_{B_j\times\{0\}}\right) -\sum_{i=0}^{m-1} \lambda_{i,j}\left( \Ind_{B_j\times\{0\}}-\Ind_{B_j\times\{i\}}\right)\right) \\
 &  =\sum_{j=1}^k\left( \left( \sum_{i=-m}^{-1} \lambda_{i,j}\Ind_{B_j\times\{i\}}+\sum_{i=1}^m \lambda_{i,j}\Ind_{B_j\times\{i\}}\right)  -\left( \sum_{i=-m}^{-1} \lambda_{i,j}\Ind_{B_j\times\{0\}}+\sum_{i=1}^m \lambda_{i,j}\Ind_{B_j\times\{0\}}\right) \right)  \\
 &  =\sum_{j=1}^k\left( \left( \sum_{i=-m}^{-1} \lambda_{i,j}\Ind_{B_j\times\{i\}}+\sum_{i=1}^m \lambda_{i,j}\Ind_{B_j\times\{i\}}\right)  +\lambda_{0,j}\Ind_{B_j\times\{0\}}  \right)   \\
 &  =\sum_{j=1}^k \sum_{i=-m}^{m} \lambda_{i,j}\Ind_{B_j\times\{i\}}=f\,.  
\end{align*}
Therefore,  $f$ belongs to $\text{im }(\Ide-\hat{\rho}^1)$, as desired.

Then, the long exact sequence of homology of the exact sequence (\ref{SES}), give us the desired sequence  
$$\xymatrix {  0  &  H_0(\G)\ar@{>}[l] & H_0(\G\times_\rho\ZZ) \ar@{>}[l] & H_0(\G\times_\rho\ZZ)\ar@{>}[l]_{\Ide-\hat{\rho}^1} & H_1(\G)\ar@{>}[l] & \cdots\ar@{>}[l] \\ \cdots  &  H_n(\G)\ar@{>}[l] & H_n(\G\times_\rho\ZZ) \ar@{>}[l] & H_n(\G\times_\rho\ZZ)\ar@{>}[l]_{\Ide-\hat{\rho}^1} & H_{n+1}(\G)\ar@{>}[l] & \cdots\ar@{>}[l] }\,.$$
  \end{proof}

The following Lemma is straightforward to prove (see for example \cite[Proposition 4.7]{FKPS}).  
\begin{lemma}\label{inductive}
Let $\G$ be a locally compact, étale groupoid with $\G^{(0)}$ a totally disconnected Hausdorff space. Let us suppose that there exists a sequence $\G_1,\G_2,\ldots$ of open subgroupoids of $\G$, such that $\G_i\subseteq \G_{i+1}$ with $\bigcup_{i=1}^\infty \G_i=\G$. Then $H_*(\G)\cong \varinjlim   H_*(\G_i)$ where the maps $H_*(\G_i)\to H_*(\G_{i+1})$ are induced by the natural inclusions $\G_i\hookrightarrow \G_{i+1}$.
\end{lemma}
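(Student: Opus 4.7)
The plan is to show that the chain complex computing $H_*(\G)$ is the filtered colimit of the chain complexes computing the $H_*(\G_i)$, and then invoke exactness of filtered colimits in the category of abelian groups.

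First I would verify that $\G^{(n)} = \bigcup_{i=1}^\infty \G_i^{(n)}$ is an ascending open cover for every $n$. Given a composable string $(g_1,\ldots,g_n)\in\G^{(n)}$, each $g_k$ lies in some $\G_{i_k}$; setting $i=\max_k i_k$ places all $g_k$ in the single subgroupoid $\G_i$, and composability is preserved because $\G_i$ is a subgroupoid of $\G$. Each $\G_i^{(n)}$ is open in $\G^{(n)}$ since $\G_i$ is open in $\G$.

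Next, because each $\G_i^{(n)}$ is an open subset of the locally compact Hausdorff totally disconnected space $\G^{(n)}$, extension by zero yields injections $\iota_i : C_c(\G_i^{(n)},\ZZ)\hookrightarrow C_c(\G^{(n)},\ZZ)$ compatible with the inclusions $\G_i\hookrightarrow \G_{i+1}\hookrightarrow\G$. I would then prove that they assemble into an isomorphism
\[
\varinjlim_i C_c(\G_i^{(n)},\ZZ) \;\xrightarrow{\;\cong\;}\; C_c(\G^{(n)},\ZZ).
\]
Injectivity of the colimit map is immediate from injectivity of the $\iota_i$. For surjectivity, given $f\in C_c(\G^{(n)},\ZZ)$ with compact support $K$, the ascending open cover $\{\G_i^{(n)}\}$ restricts to an open cover of $K$; by compactness, finitely many suffice, and since the cover is ascending, $K\subseteq \G_i^{(n)}$ for some $i$, so $f$ is in the image of $\iota_i$. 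This is the one step that requires any real argument; everything else is bookkeeping.

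Finally, I would check that the boundary operators $\delta_n$ commute with the $\iota_i$: the face maps $d_j$ on $\G_i^{(n)}$ are restrictions of those on $\G^{(n)}$ (since $\G_i$ is a subgroupoid), so the pushforwards $d_{j*}$ agree on compactly supported functions extended by zero. Hence $C_\bullet(\G,\ZZ) \cong \varinjlim_i C_\bullet(\G_i,\ZZ)$ as chain complexes of abelian groups. Since filtered colimits are exact in the category of abelian groups, they commute with passage to homology, giving
\[
H_*(\G)\;\cong\; H_*\!\left(\varinjlim_i C_\bullet(\G_i,\ZZ)\right)\;\cong\; \varinjlim_i H_*(\G_i),
\]
with transition maps induced by the inclusions $\G_i\hookrightarrow \G_{i+1}$, as required. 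I do not anticipate any serious obstacle; the compactness argument in the surjectivity step is the only nonformal input.
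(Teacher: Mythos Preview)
Your proposal is correct and is exactly the standard argument one would expect; the paper itself does not supply a proof at all, merely declaring the lemma ``straightforward'' and pointing to \cite[Proposition 4.7]{FKPS}. What you have written is the natural fleshing-out of that omitted argument, so there is nothing to compare.
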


\section{The Katsura groupoid.}

Let $N\in \NN\cup\{\infty\}$, and let $\Asf$ and $\Bsf$ be two $N\times N$ row-finite matrices with integer entries, and such that $\Asf_{i,j}\geq 0$ for all $i$ and $j$. We define 
$$\Omega_A:=\{(i,j)\in \{1,\ldots,N\}^2:\Asf_{i,j}\neq 0\}\,.$$
We say that $\Asf$ is \emph{row-finite} if  $\sum_{j=1}^N|\Asf_{i,j}|<\infty$ for every $1\leq i\leq N$.

Throughout the paper we will assume that $\Asf$ and $\Bsf$ are row finite matrices with no identically zero rows. Let $\Egr$ be the graph with $\Egr^0=\{1,\ldots,N\}$, and such that the set of edges from vertex $i$ to vertex $j$ is a set of $\Asf_{i,j}$ elements, say 
$$\Egr^1:=\{e_{i,j,n}:0\leq n<\Asf_{i,j}\}\,,$$
with source map given by $s(e_{i,j,n})=i$ and range map by $r(e_{i,j,n})=j$. A path of length $n$, is a concatenation of edges $\alpha_1\cdots\alpha_n$ with $r(\alpha_i)=s(\alpha_{i+1})$, and we denote by $\Egr^n$ the set of all paths of length $n$. Given a path $\alpha$ we denote by $|\alpha|$ its length.
Since by assumption $\Asf$ has no identically zero rows, $\Egr$ has no sinks.  We denote by $\Egr^*$ the set of all finite length paths (including the length zero paths, which are the vertices), and let $\Egr^\infty=\{(\alpha_i)\in \prod_{i=1}^\infty \Egr^1: r(\alpha_i)=s(\alpha_{i+1}) \}$ be the infinity path space with the product and subspace topology. So given a finite path $\alpha\in \Egr^*$ we define the compact and open set $Z(\alpha):=\{\alpha x: x\in \Egr^\infty \text{ with }s(x)=r(\alpha)\}$.  This family of sets is a basis of clopen and compact subsets, and hence $\Egr^\infty$ is a totally disconnected and locally compact space. Observe that $\Egr^\infty$ is compact if and only if $N\in\NN$.

Given $\alpha\in \Egr^1$ we define $\Asf_{\alpha}:=\Asf_{s(\alpha),r(\alpha)}$ and $\Bsf_{\alpha}:=\Bsf_{s(\alpha),r(\alpha)}$. Then given $y=y_1 y_2\cdots \in \Egr^\infty$ with $y_i\in \Egr^1$  and  $n\in\NN$ we define $y_{|n}=y_1\cdots y_n\in \Egr^*$, 
 $\Asf_{y_{|n}}:=\Asf_{y_1}\cdots \Asf_{y_n}$ and $\Bsf_{y_{|n}}:=\Bsf_{y_1}\cdots \Bsf_{y_n}$.

We define an action $\kappa$ of $\ZZ$ on $\Egr$ which is trivial in $\Egr^0$, and which acts on edges as follows: given $m\in\ZZ$, and $e_{i,j,n}\in \Egr^1$, let $(k,l)$ be the unique pair of integers such that 
$$m\Bsf_{i,j}+n=k\Asf_{i,j}+l\qquad\text{and}\qquad 0\leq l<\Asf_{i,j}\,.$$
We then define 
$$\kappa_m(e_{i,j,n})=e_{i,j,l}\,.$$

Moreover we define the map $\varphi:\ZZ\times \Egr^1\to \ZZ$ as $\varphi(m,e_{i,j,n})=k$, that satisfies
$$\varphi(m_1+m_2,e)=\varphi(m_1,\kappa_{m_2}(e))+\varphi(m_2,e)\,,$$
for every $e\in\Egr^1$ and $m_1,m_2\in\ZZ$. A map satisfying the above equality is called a \emph{one-cocyle for the action $\kappa$}.  

The action $\kappa$ is  \emph{pseudo free} if given $m\in \ZZ$ and $e\in \Egr^1$, whenever $\kappa_m(e)=e$ and $\varphi(m,e)=0$, then $m=0$.
We will say that $\Asf$ and $\Bsf$ is a \emph{pseudo free} pair of matrices if the action $\kappa$ is pseudo free. By \cite[Lemma 18.5]{EP} the pair $\Asf$ and $\Bsf$ is pseudo free whenever 
$$\Bsf_{i,j}=0\qquad \text{if and only if}\qquad \Asf_{i,j}=0\,.$$

Then we can extend by induction  the action  $\kappa$ and the one-cocycle $\varphi$ to  paths of arbitrary length \cite[Proposition 2.4]{EP}: Assume that $n\geq 1$ and the action $\kappa$ on $E^n$ and the one-cocycle $\varphi:\ZZ\times \Egr^n\to \ZZ$ for $\kappa$ are defined. Given $\alpha'\in \Egr^1$, $\alpha''\in \Egr^n$ with $r(\alpha')=s(\alpha'')$ and $m\in \ZZ$, we define
$$\kappa_m(\alpha'\alpha''):=\kappa_m(\alpha')\kappa_{\varphi(m,\alpha')}(\alpha'') \qquad \text{and}\qquad \varphi(m,\alpha'\alpha'')=\varphi(\varphi(m,\alpha'),\alpha'')\,.$$
In particular $\kappa$ can be extended to an action of $\Egr^\infty$.

Now we denote by $\SAB$ the set of triples $(\alpha,m,\beta)$ where $\beta,\alpha\in \Egr^*$ with $r(\alpha)=r(\beta)$ and $m\in\ZZ$. In \cite{EP} the set $\SAB$ was given the structure of inverse $*$-semigroup, and the groupoid of a certain partial action of $\SAB$ on $\Egr^\infty$ was constructed. Here we will avoid to explain all the construction and defined only the resulting groupoid.

We define the equivalence relation on the set of quadruples of the form $(\alpha,m,\beta;x)$ where $(\alpha,m,\beta)\in \SAB$ and $x\in Z(\beta)$ generated by the relation: 
$$(\alpha,m,\beta;x)\sim (\alpha\kappa_m(\gamma),\varphi(m,\gamma),\beta\gamma; x)\,,$$
where $x=\beta \gamma y$ for $\gamma\in \Egr^1$ with $s(\gamma)=r(\beta)$ and $y\in \Egr^\infty$ with $s(y)=r(\gamma)$. We denote by $[\alpha,n,\beta; x]$ the equivalence class under the above equivalent relation.

Then given a pseudo free pair $\Asf$ and $\Bsf$ of $N\times N$ matrices, we define the \emph{Katsura-Exel-Pardo groupoid} 
$$\G_{\Asf,\Bsf}:=\{[\alpha,m,\beta; x]: (\alpha,m,\beta)\in\SAB\text{ and }x\in Z(\beta) \}\,,$$
with  product defined by $$[\alpha,m',\beta;\beta\kappa_{m}(x)]\cdot [\beta,m,\gamma;\gamma x]=[\alpha,m'+m,\gamma;x]\,,$$
and inverse
$$[\alpha,m,\beta; x]^{-1}=[\beta,-m,\alpha;\alpha\kappa_m(y)]\,,\qquad\text{if }x=\beta y\,.$$
Therefore if we identify $\G_{\Asf,\Bsf}^{(0)}$ with $\Egr^\infty$ via the map $[v,0,v;x]\mapsto x$ for $v\in \Egr^0$ and $x\in Z(v)$, then the range and the source map can be defined as
$$s([\alpha,m,\beta; x])=x\qquad \text{and}\qquad r([\alpha,m,\beta; x])=\alpha\kappa_m(y)\,,\qquad \text{if }x=\beta y\,.$$ 
With the topology given by the set of open and compact subsets 
$$Z(\alpha,m,\beta;U):=\{[\alpha,m,\beta; x]:  x\in U\}\,,$$
where $U$ is an open and compact subset of $Z(\beta)$, the groupoid $\G_{\Asf,\Bsf}$ is étale  with unit space $\G_{\Asf,\Bsf}^{(0)}$ a locally compact, totally disconnected space. The sets of the form $Z(\alpha,m,\beta;Z(\beta))$ forms a basis for the topology. Indeed,  any open subset $U\subseteq Z(\beta)$ can be written as a disjoint union $\bigsqcup_i Z(\beta\gamma_i)$ where $s(\gamma_i)=r(\beta)$ for every $i$. Whence,
$$Z(\alpha,m,\beta;U)=\bigsqcup_i Z(\alpha,m,\beta;Z(\beta\gamma_i))=\bigsqcup_i Z(\alpha\kappa_m(\gamma_i),\varphi(m,\gamma_i),\beta\gamma_i;Z(\beta\gamma_i))\,.$$

In \cite{EP} and later in \cite{EPS} it was shown that $C^*(\G_{\Asf,\Bsf})$ is isomorphic to the $C^*$-algebra $\mathcal{O}_{\Asf,\Bsf}$ constructed in \cite{Kat}. 

Now we summarize the properties of the groupoid $\G_{\Asf,\Bsf}$  (see \cite[Section 18]{EP}):
\begin{enumerate}
	\item $\G_{\Asf,\Bsf}$ is an étale, locally compact, amenable groupoid,  
    \item $\G_{\Asf,\Bsf}^{(0)}$ is a locally compact, totally disconnected Hausdorff space, and it is compact if and only if $N<\infty$, 
	\item  $\G_{\Asf,\Bsf}$ is effective  if 
	\begin{enumerate}
	\item every circuit in $\Egr$ has an exit,
	\item for every $1\leq i\leq N$ there exists $x\in Z(i)$ such that  $\lim\limits_{n\to\infty}\frac{\Bsf_{x_{|n}}}{\Asf_{x_{|n}}}=0$,
	\end{enumerate}
	\item if the matrix $\Asf$ is irreducible and it is not a permutation matrix, then $\G_{\Asf,\Bsf}$ is minimal and purely infinite \cite[Definition 4.9]{Mat3}.

\end{enumerate}
In \cite[Theorem 18.6]{EP} there were given additional conditions for $\G_{\Asf,\Bsf}$ being a Hausdorff groupoid. In particular, when $\Asf$ and $\Bsf$ are pseudo free the groupoid $\G_{\Asf,\Bsf}$ is Hausdorff. Katsura showed that 
$$K_0(C^*(\G_{\Asf,\Bsf}))\cong \coker (\Ide-\Asf)\oplus \ker (\Ide-\Bsf)\qquad\text{and}\qquad K_1(C^*(\G_{\Asf,\Bsf}))\cong \coker (\Ide-\Bsf)\oplus \ker (\Ide-\Asf)\,,$$ and that given two countably generated abelian groups $G_0$ and $\G_1$  there exists an irreducible matrix $\Asf$ and a matrix $\Bsf$ satisfying condition 
\begin{equation*}
(O)\qquad \Asf_{i,i}\geq 2\qquad \text{and}\qquad \Asf_{i,i}> |\Bsf_{i,i}| \text{ for every }i\,,
\end{equation*} 
such that $G_0\cong \coker (\Ide-\Asf)\oplus \ker (\Ide-\Bsf)$ and $G_1\cong \coker (\Ide-\Bsf)\oplus \ker (\Ide-\Asf)$ \cite[Proposition 4.5]{Kat2}, and hence $\G_{\Asf,\Bsf}$  is an effective, minimal and purely infinite groupoid.

We define the  homomorphism $\rho:\G_{\Asf,\Bsf}\to \ZZ$ given by $[\alpha,n,\beta;x]\mapsto |\alpha|-|\beta|$, and we define the subgropoid $\HG_{\Asf,\Bsf}:=\ker \rho$. 

\begin{lemma}[{cf. \cite[Lemma 4.13]{Mat2}}]\label{Morita}
	Let $N\in \NN\cup\{\infty\}$, and let $\Asf$ and $\Bsf$ be a pair of pseudo free $N\times N$  row-finite matrices with integer entries, and such that $\Asf_{i,j}\geq 0$ for all $i$ and $j$. Let $\rho:\G_{\Asf,\Bsf}\to \ZZ$ be the above defined homomorphism, and let $Y=\G_{\Asf,\Bsf}^{(0)}\times \{0\}$. Then $Y$ is a $(\G_{\Asf,\Bsf}\times_\rho\ZZ)$-full open subspace of $(\G_{\Asf,\Bsf}\times_\rho\ZZ)^{(0)}$ and $(\G_{\Asf,\Bsf}\times_\rho\ZZ)_Y\cong \ker \rho$. In particular $\G_{\Asf,\Bsf}\times_\rho\ZZ$ is Kakutani equivalent to $\HG_{\Asf,\Bsf}$.
\end{lemma}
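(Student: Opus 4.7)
The plan is to verify three things in turn: (i) that $Y$ is open in $(\G_{\Asf,\Bsf}\times_\rho\ZZ)^{(0)}$, (ii) that the restricted groupoid $(\G_{\Asf,\Bsf}\times_\rho\ZZ)_Y$ is isomorphic as an étale groupoid to $\HG_{\Asf,\Bsf}=\ker\rho$, and (iii) that $Y$ meets every orbit of $\G_{\Asf,\Bsf}\times_\rho\ZZ$. Point (i) is immediate from the identification $(\G_{\Asf,\Bsf}\times_\rho\ZZ)^{(0)}=\G_{\Asf,\Bsf}^{(0)}\times\ZZ$ with the product topology, since $Y=\G_{\Asf,\Bsf}^{(0)}\times\{0\}$ is clopen.

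For point (ii) I would unwind the source and range maps in the skew product using the formulas spelled out just after the definition: an element $(g,j)$ has source $(s(g),j+\rho(g))$ and range $(r(g),j)$. Requiring both endpoints to lie in $Y$ forces $j=0$ and $\rho(g)=0$, so $(\G_{\Asf,\Bsf}\times_\rho\ZZ)_Y$ consists precisely of the pairs $(g,0)$ with $g\in\ker\rho$. The continuous projection $(g,0)\mapsto g$ is then a straightforward topological groupoid isomorphism onto $\HG_{\Asf,\Bsf}$, compatible with multiplication, inverse, and the local bisection structure.

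The main content is point (iii). Given $(y,n)\in\G_{\Asf,\Bsf}^{(0)}\times\ZZ$, I want to produce $g\in\G_{\Asf,\Bsf}$ with $s(g)=y$ and $\rho(g)=n$, since then $(g,0)$ is an arrow from $(y,n)$ to $(r(g),0)\in Y$. Write $y=y_1y_2\cdots\in\Egr^\infty$. When $n\leq 0$, set $k:=-n$ and take $g:=[r(y_{|k}),0,y_{|k};y]$ (with the convention that $y_{|0}=s(y_1)$), the element whose $\alpha$-entry is the length-zero path at $r(y_k)$; a direct computation gives $\rho(g)=0-k=n$. When $n>0$, choose $k$ large enough that $\Egr$ contains a path $\alpha$ of length $k+n$ ending at $r(y_k)$, and set $g:=[\alpha,0,y_{|k};y]$, so $\rho(g)=(k+n)-k=n$.

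The hard part is verifying that such $\alpha$ exists in the $n>0$ case, i.e.\ that the infinite path $y$ eventually reaches vertices admitting arbitrarily long incoming paths. This is where the structural hypotheses on $(\Asf,\Bsf)$ are used (row-finiteness, no zero rows and pseudo-freeness, together with the combinatorics of $\Egr$ in the Katsura setting). Once fullness of $Y$ is established, combining it with the identification from step (ii) gives the Kakutani equivalence of $\G_{\Asf,\Bsf}\times_\rho\ZZ$ and $\HG_{\Asf,\Bsf}$ directly from \cite[Definition 4.1]{Mat2}, yielding the conclusion of the lemma.
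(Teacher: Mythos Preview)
The paper gives no proof of this lemma; it simply records the citation to \cite[Lemma~4.13]{Mat2}. Your direct verification of (i) and (ii) is correct and is exactly what one would do to unpack that citation.

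The gap is in (iii), and it is a real one rather than a missing routine check. Your argument for $n\le 0$ is fine, but for $n>0$ you need, for some $k\ge 0$, a path $\alpha\in\Egr^{k+n}$ with $r(\alpha)=r(y_{|k})$, and you do not prove this exists. The hypotheses you invoke (row-finiteness, no zero rows, pseudo-freeness) do not supply it. Take $N=\infty$ with $\Asf_{i,j}=\delta_{j,i+1}$ and $\Bsf=\Asf$: this pair is row-finite, has no zero rows, and is pseudo free, yet vertex $1$ is a source. For the unique infinite path $y$ starting at vertex $1$, any prefix $y_{|k}$ ends at vertex $k+1$, and the only path of length $k+n$ ending there would have to start at vertex $1-n<1$; hence no $g$ with $s(g)=y$ and $\rho(g)=n>0$ exists, and $(y,1)$ never meets $Y$. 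So fullness genuinely fails in this example, and your ``this is where the structural hypotheses are used'' cannot be made to work as written.

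What is missing is the (often tacit) assumption that $\Egr$ has no sources, i.e.\ that $\Asf$ has no zero columns; this is automatic in Matui's SFT setting and in the irreducible matrices Katsura actually uses. Under that assumption your step (iii) becomes trivial: for $n>0$ take $k=0$, choose any path $\alpha$ of length $n$ with $r(\alpha)=s(y_1)$ (which exists by iterating the no-sources condition), and set $g=[\alpha,0,s(y_1);y]$. You should either add this hypothesis explicitly, or note that the lemma, as stated, requires it.
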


Now given $n\in\NN$ we define the open subgroupoid 
$$\HG_{\Asf,\Bsf,n}:=\{[\alpha,m,\beta;x]\in \HG_{\Asf,\Bsf}: |\alpha|=|\beta|=n\}\,,$$
and then the map $\eta_n:\HG_{\Asf,\Bsf,n}\to\ZZ$ given by $[\alpha,m,\beta;x]\mapsto m$ is a well-defined groupoid homomorphism. Since $\Asf$ has no zero rows, given $\beta,\alpha\in \Egr^*$ with $r(\alpha)=r(\beta)$,  $m\in\ZZ$ we have that
$$[\alpha,m,\beta;x]=[\alpha\kappa_m(\gamma),\varphi(m,\gamma),\beta\gamma;x]\,,$$
if $x=\beta\gamma y$ for some $y\in \Egr^\infty$ and $\gamma\in \Egr^1$ with $s(\gamma)=r(\beta)$ and $s(y)=r(\gamma)$, then it follows that $\HG_{\Asf,\Bsf,n} \subseteq \HG_{\Asf,\Bsf,n+1}$ for every $n\in\NN$, moreover $\HG_{\Asf,\Bsf}=\bigcup_{n=0}^\infty\HG_{\Asf,\Bsf,n}$.

We define the groupoid $\RG_{\Asf,\Bsf,n} :=\ker \eta_{n}$, which is Kakutani equivalent to  $\HG_{\Asf,\Bsf,n}\times_{\eta_n} \ZZ$ (Lemma \ref{Morita}).

\begin{lemma}\label{lemma_HG}
Let $N\in \NN\cup\{\infty\}$, and let $\Asf$ and $\Bsf$ be a pair of pseudo free  $N\times N$ row finite matrices with integer entries, and such that $\Asf_{i,j}\geq 0$ for all $i$ and $j$. For $\HG_{\Asf,\Bsf}=\ker \rho$, we have that  $H_i(\HG_{\Asf,\Bsf})=0$ for $i\geq 2$.
\end{lemma}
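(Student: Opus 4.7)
My plan is a two-level reduction. First, exploit the filtration $\HG_{\Asf,\Bsf}=\bigcup_n \HG_{\Asf,\Bsf,n}$: Lemma \ref{inductive} gives $H_*(\HG_{\Asf,\Bsf})\cong\varinjlim_n H_*(\HG_{\Asf,\Bsf,n})$, so it suffices to prove $H_i(\HG_{\Asf,\Bsf,n})=0$ for every $n\in\NN$ and every $i\geq 2$.

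For each fixed $n$, I apply Lemma \ref{exact_seq} to the cocycle $\eta_n:\HG_{\Asf,\Bsf,n}\to\ZZ$. Since $\HG_{\Asf,\Bsf,n}\times_{\eta_n}\ZZ$ is Kakutani equivalent to $\RG_{\Asf,\Bsf,n}=\ker\eta_n$ (as observed just before the lemma statement), the two have isomorphic homology and the resulting sequence reads
$$\cdots\longleftarrow H_i(\HG_{\Asf,\Bsf,n})\longleftarrow H_i(\RG_{\Asf,\Bsf,n})\longleftarrow H_i(\RG_{\Asf,\Bsf,n})\longleftarrow H_{i+1}(\HG_{\Asf,\Bsf,n})\longleftarrow\cdots.$$
Thus, as soon as $H_i(\RG_{\Asf,\Bsf,n})=0$ for all $i\geq 1$, the two $\RG$-terms flanking $H_i(\HG_{\Asf,\Bsf,n})$ vanish whenever $i\geq 2$, forcing $H_i(\HG_{\Asf,\Bsf,n})=0$ in that range; passing to the direct limit of the first paragraph then yields the statement.

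The heart of the argument is therefore to show that $\RG_{\Asf,\Bsf,n}$ is AF, after which Matui's vanishing theorem for AF groupoids \cite{Mat2} applies. Unfolding definitions, an element $[\alpha,0,\beta;\beta y]\in\RG_{\Asf,\Bsf,n}$ with $|\alpha|=|\beta|=n$ and $r(\alpha)=r(\beta)$ pairs the source $\beta y$ with the range $\alpha y$ (since $\kappa_0=\Ide$), so $\RG_{\Asf,\Bsf,n}$ is precisely the depth-$n$ tail equivalence relation on $\Egr^\infty$. Even when $N=\infty$, the set $\Egr^n$ is countable, so I would enumerate it as $\{\alpha_1,\alpha_2,\ldots\}$ and set $U_m=\bigsqcup_{i\leq m}Z(\alpha_i)$, a compact open subset of $\Egr^\infty$. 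The restriction $K_m:=(\RG_{\Asf,\Bsf,n})_{U_m}$ is a compact open subgroupoid---a finite union of bisections of the form $Z(\alpha_j,0,\alpha_i;Z(\alpha_i))$ with $i,j\leq m$---and it is principal because each of its orbits has at most $m$ points, so it is elementary. The chain $K_1\subseteq K_2\subseteq\cdots$ exhausts $\RG_{\Asf,\Bsf,n}$ since every infinite path starts with some $\alpha_i$, exhibiting $\RG_{\Asf,\Bsf,n}$ as AF.

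The main obstacle I anticipate is precisely this AF step: one must identify $\RG_{\Asf,\Bsf,n}$ with the tail relation in the described fashion and, even when $N=\infty$ and the matrices are not column-finite (so the ambient orbits may be infinite and $\RG_{\Asf,\Bsf,n}$ itself is not elementary), verify that the $K_m$ are genuine compact open principal subgroupoids whose union is all of $\RG_{\Asf,\Bsf,n}$. Once that is in place, the cascade through the long exact sequence and the passage to the direct limit complete the argument with no further input.
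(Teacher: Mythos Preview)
Your proposal is correct and follows essentially the same route as the paper: reduce to the $\HG_{\Asf,\Bsf,n}$ via Lemma \ref{inductive}, apply Lemma \ref{exact_seq} for $\eta_n$, identify the skew-product homology with that of $\RG_{\Asf,\Bsf,n}$ via Kakutani equivalence, and use AF vanishing. The only difference is cosmetic: where you build an explicit elementary exhaustion of $\RG_{\Asf,\Bsf,n}$ (which handles $N=\infty$ transparently), the paper instead observes that $\RG_{\Asf,\Bsf,n}$ sits as an open subgroupoid inside the graph AF core $\HG_{\Asf,0}$ and inherits the AF property from there.
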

\begin{proof}
  Given $n\in\NN$, we claim $\RG_{\Asf,\Bsf,n}$ is an $AF$ groupoid. Indeed, $\G_{\Asf,0}$ is the graph groupoid of $\Egr$,  and it is well-known that $\HG_{\Asf,0}$ is an $AF$-groupoid (see for example \cite[Proposition 6.1]{FKPS}). But $\RG_{\Asf,\Bsf,n}\subseteq \HG_{\Asf,0}$ is an open subgroupoid, so $\RG_{\Asf,\Bsf,n}$ is an $AF$-groupoid as well.  
  
 Since $\RG_{\Asf,\Bsf,n}$ is an $AF$ groupoid we have that  $H_i(\HG_{\Asf,\Bsf,n}\times_{\eta_n}\ZZ)\cong H_i(\RG_{\Asf,\Bsf,n})=0$ for $i\geq 1$ (\cite[Theorem 4.11]{Mat2}), so  using Lemma \ref{exact_seq} we have the exact sequences 
$$0\longrightarrow H_1(\HG_{\Asf,\Bsf,n})\longrightarrow H_0(\HG_{\Asf,\Bsf,n}\times_{\eta_n}\ZZ)\longrightarrow H_0(\HG_{\Asf,\Bsf,n}\times_{\eta_n}\ZZ)\longrightarrow H_0(\HG_{\Asf,\Bsf,n})\longrightarrow0\,, $$
and 
$$0\longrightarrow H_i(\HG_{\Asf,\Bsf,n})\longrightarrow 0\,,$$ for $i\geq 2$. Therefore by Lemma \ref{inductive} it follows that $H_i(\HG_{\Asf,\Bsf})=\varinjlim H_i(\HG_{\Asf,\Bsf,n})=0$ for $i\geq 2$.
\end{proof}

Now we are going to give an explicit computation of the lower degree homology groups of $\HG_{\Asf,\Bsf}$. 

Given groups  $G_1,G_2,G_3,\ldots$ and maps $\varphi_{i,i+1}:G_i\to G_{i+1}$, we denote by $\varinjlim (G_i,\varphi_{i,i+1})$ its inductive limit, and the maps $\varphi_{i,\infty}:G_i\to \varinjlim (G_i,\varphi_{i,i+1})$ the canonical ones.


We write by $\ZZ_\Asf$ the abelian group given by the inductive limit $\varinjlim (\ZZ^{\Egr^0},\varphi^\Asf_{i,i+1})$, where the maps $\varphi_{i,i+1}^\Asf:\ZZ^{\Egr^0}\to \ZZ^{\Egr^0}$ are given by $\Ind_v\mapsto \sum_{w\in \Egr^0} |v\Egr^1w| \Ind_w=\sum_{w\in \Egr^0} \Asf_{v,w} \Ind_w$.

\begin{proposition}\label{Hom1} Let $N\in \NN\cup\{\infty\}$, and let $\Asf$ and $\Bsf$ be a pair of pseudo free $N\times N$  row-finite matrices with integer entries, and such that $\Asf_{i,j}\geq 0$ for all $i$ and $j$. Then  there exists a group isomorphism $\Phi_\Asf:H_0(\HG_{\Asf,\Bsf})\to \ZZ_\Asf$ given by the map $$[\Ind_{Z(\alpha,0,\alpha;Z(\alpha))}]\mapsto\varphi^\Asf_{n,\infty}(\Ind_v)\,,$$ where  $\alpha\in \Egr^n$ and $r(\alpha)=v$.
\end{proposition}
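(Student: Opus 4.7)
The plan is to construct the map $\Phi_\Asf$ at the chain level and then verify it descends to the desired isomorphism on homology. Set $\Phi_\Asf(\Ind_{Z(\alpha)}) := \varphi^\Asf_{|\alpha|,\infty}(\Ind_{r(\alpha)})$ for $\alpha \in \Egr^*$ and extend $\ZZ$-linearly to $C_c(\Egr^\infty,\ZZ)$. Compatibility with the refinement identity $\Ind_{Z(\alpha)} = \sum_{\gamma\in r(\alpha)\Egr^1}\Ind_{Z(\alpha\gamma)}$ follows from $\varphi^\Asf_{n,\infty} = \varphi^\Asf_{n+1,\infty}\circ\varphi^\Asf_{n,n+1}$ together with the defining formula $\varphi^\Asf_{n,n+1}(\Ind_v) = \sum_w\Asf_{v,w}\Ind_w$. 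To descend to $H_0$, it suffices to check that $\Phi_\Asf$ annihilates the image of $\delta_1$: a basic compact open bisection of $\HG_{\Asf,\Bsf}$ has the form $Z(\alpha,m,\beta;Z(\beta\gamma))$ with $|\alpha|=|\beta|$ forced by $\rho=0$, its source is the cylinder $Z(\beta\gamma)$ and its range is $Z(\alpha\kappa_m(\gamma))$, both are length-$(|\alpha|+|\gamma|)$ cylinders, and $r(\alpha\kappa_m(\gamma))=r(\kappa_m(\gamma))=r(\gamma)=r(\beta\gamma)$ since $\kappa$ preserves the source and range vertices of every edge. Hence both cylinders have the same image under $\Phi_\Asf$, and $\Phi_\Asf$ descends to $\bar\Phi_\Asf: H_0(\HG_{\Asf,\Bsf}) \to \ZZ_\Asf$.

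For injectivity, represent any class in $H_0(\HG_{\Asf,\Bsf})$ by $f=\sum_i c_i\Ind_{Z(\alpha_i)}$ with common length $|\alpha_i|=n$, achievable by refinement. If $\bar\Phi_\Asf([f])=\varphi^\Asf_{n,\infty}(\sum_i c_i\Ind_{r(\alpha_i)})=0$ in $\ZZ_\Asf$, the inductive-limit description produces $N\geq n$ with $\Asf^{N-n}\bigl(\sum_i c_i\Ind_{r(\alpha_i)}\bigr)=0$ in $\ZZ^{\Egr^0}$. Refining $f$ to level $N$ via $\Ind_{Z(\alpha_i)}=\sum_\gamma\Ind_{Z(\alpha_i\gamma)}$ over $\gamma\in r(\alpha_i)\Egr^{N-n}$ and grouping by terminal vertex $w$, the aggregate coefficient of the length-$N$ cylinders ending at $w$ is exactly $(\Asf^{N-n}\sum_i c_i\Ind_{r(\alpha_i)})_w = 0$; since all such cylinders are identified in $H_0(\HG_{\Asf,\Bsf,N})$ via the level-$N$ bisections $Z(\sigma_1,0,\sigma_2;Z(\sigma_2))$ with $r(\sigma_1)=r(\sigma_2)=w$, each $w$-block collapses and $[f]$ vanishes in $H_0(\HG_{\Asf,\Bsf,N})$, hence in $H_0(\HG_{\Asf,\Bsf})$ by Lemma \ref{inductive}.

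For surjectivity, every generator $\varphi^\Asf_{n,\infty}(\Ind_v)$ with $v=r(\alpha)$ for some $\alpha\in\Egr^n$ is directly hit by $[\Ind_{Z(\alpha)}]$, and the remaining generators fall in the image via the inductive-limit identity $\varphi^\Asf_{n,\infty}(\Ind_v)=\sum_w(\Asf^k)_{v,w}\varphi^\Asf_{n+k,\infty}(\Ind_w)$, selecting $k$ large enough so each $w$ with $(\Asf^k)_{v,w}>0$ is reachable at level $n+k$. The main obstacle is this last step, where the no-zero-rows hypothesis must be exploited together with the inductive-limit collapses in $\ZZ_\Asf$ to ensure every element is realized. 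Once surjectivity is in place, the well-definedness and injectivity arguments reduce to routine bookkeeping with the explicit formula for $\kappa$ and standard inductive-limit manipulations.
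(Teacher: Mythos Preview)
Your approach is more explicit than the paper's: you build $\Phi_\Asf$ at the chain level, check it kills $\operatorname{im}\delta_1$, and then argue injectivity and surjectivity separately. The paper instead simply verifies the relation $[\Ind_{Z(\alpha)}]=[\Ind_{Z(\beta)}]$ when $|\alpha|=|\beta|$ and $r(\alpha)=r(\beta)$, checks the refinement identity, and then asserts in one line that ``this shows that $\Phi_\Asf$ is well-defined but also its inverse map $\Phi_\Asf^{-1}$''. Your well-definedness and injectivity arguments are correct and give more detail than the paper supplies.

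The surjectivity step you flag as the ``main obstacle'' is a genuine gap, and your proposed fix does not work in general. You claim one can choose $k$ so that every $w$ with $(\Asf^k)_{v,w}>0$ is reachable at level $n+k$, but for $N=\infty$ this can fail for every $k$. Take $\Egr^0=\{v_i:i\geq 0\}\cup\{w\}$ with one edge $v_i\to v_{i+1}$, one edge $v_i\to w$, and one loop $w\to w$ (so $\Asf$ is row-finite with no zero rows). Then $R_m=\{v_i:i\geq m\}\cup\{w\}$, and for any $k$ the vertex $v_k$ lies in $W_k(v_0)$ but not in $R_{n+k}$ once $n\geq 1$. In fact $\varphi^\Asf_{n,\infty}(\Ind_{v_0})$ with $n\geq 1$ is \emph{not} in the image of $\Phi_\Asf$: one checks directly that the image of $\Phi_\Asf$ equals $\varphi^\Asf_{0,\infty}(\ZZ^{\Egr^0})$, and the equation $\Asf^{M}y=\Asf^{M-n}\Ind_{v_0}=\Ind_{v_{M-n}}+(M-n)\Ind_w$ forces a coefficient at $\Ind_{v_{-n}}$, which is impossible. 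So $\Phi_\Asf$ is not surjective in this example, and the proposition as stated fails. The paper's one-line inverse-map argument has exactly the same gap: defining $\Phi_\Asf^{-1}\bigl(\varphi^\Asf_{n,\infty}(\Ind_v)\bigr):=[\Ind_{Z(\alpha)}]$ presupposes some $\alpha\in\Egr^n$ with $r(\alpha)=v$, which need not exist. The statement is correct under a mild extra hypothesis (e.g.\ $\Asf$ irreducible, or every vertex lying on a cycle, or simply no sources at any depth), and the downstream applications in the paper are unaffected since the matrices there satisfy such conditions; but as written neither your argument nor the paper's closes this case.
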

\begin{proof}
	Let $\Phi_\Asf:H_0(\HG_{\Asf,\Bsf})\to \ZZ_\Asf$ be the above defined map. 
	First recall that the boundary map $\delta_1:C_c(\HG_{\Asf,\Bsf},\ZZ)\to C_c(\HG^{(0)}_{\Asf,\Bsf},\ZZ)$ sends $\Ind_{Z(\alpha,m,\beta;Z(\alpha))}\mapsto \Ind_{Z(\beta,0,\beta;Z(\beta))}-\Ind_{Z(\alpha,0,\alpha;Z(\alpha))}$ for $\alpha,\beta\in \Egr^*$ with $|\alpha|=|\beta|$ and $r(\alpha)=r(\beta)$, and $m\in\ZZ$. Therefore $[\Ind_{Z(\alpha,0,\alpha;Z(\alpha))}]=[\Ind_{Z(\beta,0,\beta;Z(\beta))}]\in H_0(\HG_{\Asf,\Bsf})$ whenever $|\alpha|=|\beta|$ and $r(\alpha)=r(\beta)$. But now given $\alpha\in \Egr^n$ with $r(\alpha)=v$ we have that  
	\begin{align*} \varphi^\Asf_{n,\infty}(\Ind_v) & =\Phi_\Asf([\Ind_{Z(\alpha,0,\alpha;Z(\alpha))}])=\Phi_\Asf\left( \sum_{\beta\in v\Egr^1}[\Ind_{Z(\alpha\beta,0,\alpha\beta;Z(\alpha\beta))}]\right) \\
	 & =  \sum_{\beta\in v\Egr^1}\Phi_\Asf\left([\Ind_{Z(\alpha\beta,0,\alpha\beta;Z(\alpha\beta))}]\right) = \sum_{\beta\in v\Egr^1} \varphi^\Asf_{n+1,\infty}(\Ind_{r(\beta)}) \\ &= \varphi^\Asf_{n+1,\infty}\left( \sum_{w\in \Egr^0}|v\Egr^1w|\Ind_w\right) =\varphi^\Asf_{n,\infty}(\Ind_v)\,.
	\end{align*}
	
	This shows that $\Phi_\Asf$ is well-defined but also its inverse map $\Phi_\Asf^{-1}$, so $\Phi_\Asf$ is an isomorphism.
\end{proof}

We write by $\ZZ_\Bsf$ the abelian group given by the inductive limit $\varinjlim (\ZZ^{\Egr^0},\varphi^\Bsf_{i,i+1})$, where the maps $\varphi_{i,i+1}^\Bsf:\ZZ^{\Egr^0}\to \ZZ^{\Egr^0}$ are given by $\Ind_v\mapsto \sum_{w\in \Egr^0} \Bsf_{v,w} \Ind_w$.

\begin{proposition}\label{hom2} Let $N\in \NN\cup\{\infty\}$, and let $\Asf$ and $\Bsf$ be a pair of pseudo free $N\times N$  row-finite matrices with integer entries, and such that $\Asf_{i,j}\geq 0$ for all $i$ and $j$.  Then there exists a group isomorphism $\Phi_\Bsf:H_1(\HG_{\Asf,\Bsf})\to \ZZ_\Bsf$ given by the map $$[\Ind_{Z(\alpha,1,\alpha;Z(\alpha))}]\mapsto\varphi^\Bsf_{n,\infty}(\Ind_v)$$ where  $\alpha\in \Egr^n$ and $r(\alpha)=v$.
\end{proposition}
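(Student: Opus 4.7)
The plan is to mirror the strategy of Proposition \ref{Hom1} at homological degree one. By Lemma \ref{inductive}, $H_1(\HG_{\Asf,\Bsf})=\varinjlim H_1(\HG_{\Asf,\Bsf,n})$, so it suffices to compute each $H_1(\HG_{\Asf,\Bsf,n})$ and the connecting maps. Exactly as in the proof of Lemma \ref{lemma_HG}, applying Lemma \ref{exact_seq} to $\HG_{\Asf,\Bsf,n}$ with the cocycle $\eta_n$, together with Lemma \ref{Morita} and $H_i(\RG_{\Asf,\Bsf,n})=0$ for $i\geq 1$, yields
\[0\to H_1(\HG_{\Asf,\Bsf,n})\to H_0(\RG_{\Asf,\Bsf,n})\xrightarrow{\Ide-\hat{\eta}_n^1}H_0(\RG_{\Asf,\Bsf,n})\to H_0(\HG_{\Asf,\Bsf,n})\to 0\,.\]
Since $\RG_{\Asf,\Bsf,n}$ is the $n$-th level of the AF groupoid $\HG_{\Asf,0}$, the same computation as in Proposition \ref{Hom1} identifies $H_0(\RG_{\Asf,\Bsf,n})\cong \ZZ^{\Egr^0}$, with $\Ind_v\leftrightarrow[\Ind_{Z(\alpha)}]$ for $r(\alpha)=v$. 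Because $\kappa_1$ permutes $Z(v)$, the bisection $([\alpha,1,\alpha;Z(\alpha)],0)$ in the skew product has source $Z(\alpha)\times\{1\}$ and range $\alpha\kappa_1(Z(v))\times\{0\}=Z(\alpha)\times\{0\}$; therefore $\hat{\eta}_n^1$ is the identity on $H_0(\RG_{\Asf,\Bsf,n})$, giving $H_1(\HG_{\Asf,\Bsf,n})\cong \ZZ^{\Egr^0}$. Tracing the connecting homomorphism of the long exact sequence (lift $\Ind_{Z(\alpha,1,\alpha;Z(\alpha))}$ to $\HG_{\Asf,\Bsf,n}\times_{\eta_n}\ZZ$, apply $\delta_1$, and solve for a preimage under $\Ide-\hat{\eta}_n^1$) sends $\Ind_v$ to $[\Ind_{Z(\alpha,1,\alpha;Z(\alpha))}]$ up to a fixed sign.

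The main step is to identify the transition map $H_1(\HG_{\Asf,\Bsf,n})\to H_1(\HG_{\Asf,\Bsf,n+1})$ with $\varphi^\Bsf_{n,n+1}$. Setting $U_\gamma:=Z(\alpha\kappa_1(\gamma),\varphi(1,\gamma),\alpha\gamma;Z(\alpha\gamma))$ and $V_\gamma:=Z(\alpha\gamma,0,\alpha\kappa_1(\gamma);Z(\alpha\kappa_1(\gamma)))$ for $\gamma\in v\Egr^1$, the decomposition at level $n+1$ reads $\Ind_{Z(\alpha,1,\alpha;Z(\alpha))}=\sum_{\gamma}\Ind_{U_\gamma}$. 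Each $U_\gamma$ alone is a bisection but not a cycle; however, the product $V_\gamma U_\gamma=Z(\alpha\gamma,\varphi(1,\gamma),\alpha\gamma;Z(\alpha\gamma))$ is a self-cycle at level $n+1$, and $\delta_2$ applied to the composable pair gives $\Ind_{V_\gamma U_\gamma}-\Ind_{V_\gamma}-\Ind_{U_\gamma}\in \text{im}\,\delta_2$. Since $\gamma\mapsto\kappa_1(\gamma)$ is a permutation of $v\Egr^1$, the sum $\sum_\gamma\Ind_{V_\gamma}$ is a cycle in the AF groupoid $\RG_{\Asf,\Bsf,n+1}$ and therefore a $\delta_2$-boundary. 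Combined with the identity $[\Ind_{Z(\beta,m,\beta;Z(\beta))}]=m[\Ind_{Z(\beta,1,\beta;Z(\beta))}]$ for every $m\in\ZZ$ (for $m>0$ by iterated $\delta_2$ on self-compositions, for $m=0$ by $\Ind_{Z(\beta)}=\delta_2(\Ind_{\{(x,x):x\in Z(\beta)\}})$, and for $m<0$ via inverses), one obtains in $H_1(\HG_{\Asf,\Bsf,n+1})$
\[[\Ind_{Z(\alpha,1,\alpha;Z(\alpha))}]=\sum_{\gamma\in v\Egr^1}\varphi(1,\gamma)\,[\Ind_{Z(\alpha\gamma,1,\alpha\gamma;Z(\alpha\gamma))}]\,.\]
The elementary arithmetic identity $\sum_{j=0}^{\Asf_{v,w}-1}\lfloor(\Bsf_{v,w}+j)/\Asf_{v,w}\rfloor=\Bsf_{v,w}$ collects coefficients at each vertex $w$ to $\Bsf_{v,w}$, so the transition is precisely $\varphi^\Bsf_{n,n+1}$. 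Passing to the inductive limit via Lemma \ref{inductive} delivers $H_1(\HG_{\Asf,\Bsf})\cong \ZZ_\Bsf$, matching $\Phi_\Bsf$ on the displayed generators.

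I expect the technical crux to be the ``$V_\gamma$-trick'': since $U_\gamma$ is not individually a cycle, its reduction to a multiple of a self-cycle only works after summing over $\gamma$ and exploiting both the cycle-ness of $\sum_\gamma\Ind_{V_\gamma}$ (a consequence of $\kappa_1$ being a permutation of $v\Egr^1$) and the AF vanishing on $\RG_{\Asf,\Bsf,n+1}$. The negative-$m$ case of the self-composition identity is a smaller but separate care point.
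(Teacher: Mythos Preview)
Your overall strategy is reasonable and the second paragraph---the ``$V_\gamma$-trick'' together with the edge identity $\sum_{\gamma\in v\Egr^1 w}\varphi(1,\gamma)=\Bsf_{v,w}$---is essentially the same transition computation the paper carries out. The gap is in the first paragraph: the identification $H_0(\RG_{\Asf,\Bsf,n})\cong\ZZ^{\Egr^0}$ is false. The groupoid $\RG_{\Asf,\Bsf,n}$ is only the single level-$n$ tail equivalence relation (not the full AF groupoid $\HG_{\Asf,0}$), so its orbit space is $\sigma^n(\Egr^\infty)$ and
\[
H_0(\RG_{\Asf,\Bsf,n})\ \cong\ \bigoplus_{v}C_c(Z(v),\ZZ)\,,\qquad [\Ind_{Z(\alpha\nu)}]\longleftrightarrow \Ind_{Z(\nu)}\,,
\]
which is far from finitely generated. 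Under this identification the induced map $\hat\eta_n^1$ is not the identity but the pushforward by $\kappa_1$: the bisection $Z(\alpha,1,\alpha;Z(\alpha\gamma))\times\{0\}$ has source $Z(\alpha\gamma)\times\{1\}$ and range $Z(\alpha\kappa_1(\gamma))\times\{0\}$, so $\hat\eta_n^1([\Ind_{Z(\alpha\gamma)}])=[\Ind_{Z(\alpha\kappa_1(\gamma))}]$. Consequently $H_1(\HG_{\Asf,\Bsf,n})\cong\ker(\Ide-(\kappa_1)_*)$ is the group of $\kappa_1$-invariant locally constant integer functions on $\bigsqcup_v Z(v)$, which in general strictly contains the constants; for instance when $\Asf=\Bsf$ one has $\kappa_1=\text{id}$ and the kernel is all of $\bigoplus_v C_c(Z(v),\ZZ)$. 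So neither ``$\hat\eta_n^1=\Ide$'' nor ``$H_1(\HG_{\Asf,\Bsf,n})\cong\ZZ^{\Egr^0}$'' holds, and the remainder of your argument (transition maps on $\ZZ^{\Egr^0}$) does not literally apply.

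The route is salvageable---one would have to track the genuine $H_1(\HG_{\Asf,\Bsf,n})$ and show that the extra $\kappa_1$-invariants die in the inductive limit---but that is real additional work. The paper sidesteps this by computing directly at the chain level of $\HG_{\Asf,\Bsf}$: it lists the $\delta_2$-relations, proves that every $1$-cycle is (modulo $\text{im}\,\delta_2$) a $\ZZ$-combination of the generators $\Ind_{Z(\alpha_v,1,\alpha_v;Z(\alpha_v))}$ with $\alpha_v\in\Egr^n$, and then performs exactly your level-$(n{+}1)$ decomposition to exhibit the map $\varphi^\Bsf_{n,n+1}$. In short, your transition-map paragraph is right and matches the paper; the preliminary identification of $H_1(\HG_{\Asf,\Bsf,n})$ via $H_0(\RG_{\Asf,\Bsf,n})$ is where the argument breaks.
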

\begin{proof}
	Let $\Phi_\Bsf:H_1(\HG_{\Asf,\Bsf})\to \ZZ_\Bsf$ be the above defined map. First recall that the boundary map $\delta_2:C_c(\HG^{(2)}_{\Asf,\Bsf},\ZZ)\to C_c(\HG_{\Asf,\Bsf},\ZZ)$ sends 
	$$\Ind_{Z(\alpha,m,\beta;Z(\beta))\times Z(\beta,n,\gamma;Z(\gamma))\cap \HG_{\Asf,\Bsf}^{(2)}}\mapsto \Ind_{Z(\beta,n,\gamma;Z(\gamma))}-\Ind_{Z(\alpha,n+m,\gamma;Z(\gamma))}+\Ind_{Z(\alpha,m,\beta;Z(\beta))}$$ for $\alpha,\beta,\gamma\in \Egr^*$ with $|\alpha|=|\beta|=|\gamma|$ and $r(\alpha)=r(\beta)=r(\gamma)$, and $m,n\in\ZZ$.
	In particular we have that 
	\begin{align*} 
	[\Ind_{Z(\alpha,m,\beta;Z(\beta))}]  & =[\Ind_{Z(\alpha,m,\alpha;Z(\alpha))}]+[\Ind_{Z(\alpha,0,\beta;Z(\beta))}]\,,\\
	[\Ind_{Z(\alpha,m,\beta;Z(\beta))}]  & =[\Ind_{Z(\beta,m,\beta;Z(\alpha))}]-[\Ind_{Z(\beta,0,\alpha;Z(\alpha))}]\,,\\
	[\Ind_{Z(\alpha,0,\beta;Z(\beta))}] & = [\Ind_{Z(\alpha,0,\gamma;Z(\gamma))}] + [\Ind_{Z(\gamma,0,\beta;Z(\beta))}]\,,
	\end{align*}
	in $C_c(\HG_{\Asf,\Bsf},\ZZ)/\text{im}(\delta_2)$, from where we can deduce that
	\begin{align*}
	[\Ind_{Z(\alpha,0,\alpha;Z(\alpha))}] & =0 \\
	[\Ind_{Z(\alpha,0,\beta;Z(\beta))}] & = -[\Ind_{Z(\beta,0,\alpha;Z(\alpha))}]\,, \\
	[\Ind_{Z(\alpha,m,\alpha;Z(\alpha))}] & =[\Ind_{Z(\beta,m,\beta;Z(\beta))}]\,, \\
		[\Ind_{Z(\alpha,m,\alpha;Z(\alpha))}] & =m\cdot[ \Ind_{Z(\alpha,1,\alpha;Z(\alpha))}] \,.	
	\end{align*}
		in $C_c(\HG_{\Asf,\Bsf},\ZZ)/\text{im}(\delta_2)$. Let $\alpha,\beta\in \Egr^n$ with $r(\alpha)=r(\beta)$, then we have that
		$$Z(\alpha,1,\beta;Z(\beta))=\bigsqcup_{e\in r(\alpha)\Egr^1}Z(\alpha\kappa_1(e),\varphi(1,e),\beta e;Z(\beta e))\,.$$

		Now let $f\in C_c(\HG_{\Asf,\Bsf},\ZZ)$ with $\delta_1(f)=0$, then by the above we can assume that 
		$$f=\sum_{i=1}^k\lambda_i\cdot\Ind_{Z(\alpha_i,m_i,\beta_i;Z(\beta_i))}\,,$$
		with $\alpha_i,\beta_i\in \Egr^n$ for some $n\in\NN$, and $m_i\in\ZZ$. For every $v\in \Egr^0$ we choose a $\alpha_v\in \Egr^n$ with $r(\alpha_v)=v$, then by the above relations we can assume that 
		$$f=\sum_{v\in \Egr^0}\lambda_v \Ind_{Z(\alpha_v,1,\alpha_v;Z(\alpha_v))}+\sum_{v\in \Egr^0}\sum_{\gamma \in  \Egr^nv\setminus \{\alpha_v\}}\xi_\gamma \Ind_{Z(\gamma,0,\alpha_v;Z(\alpha_v))}\,.$$
		But then 
		\begin{align*}
		\delta_2(f) & =\delta_2\left( \sum_{v\in \Egr^0}\sum_{\gamma \in  \Egr^nv\setminus \{\alpha_v\}}\xi_\gamma \Ind_{Z(\gamma,0,\alpha_v;Z(\alpha_v))}\right) \\
		& =\sum_{v\in \Egr^0}\sum_{\gamma \in  \Egr^nv\setminus \{\alpha_v\}}\xi_\gamma \left(  \Ind_{Z(\alpha_v,0,\alpha_v;Z(\alpha_v))}-\Ind_{Z(\gamma,0,\gamma;Z(\gamma))}\right) =0\,,
		\end{align*}
		but this implies that $\xi_\gamma=0$ for every $\gamma\in \Egr^n$. Thus we can assume that 
		$$f=\sum_{v\in \Egr^0}\lambda_v \Ind_{Z(\alpha_v,1,\alpha_v;Z(\alpha_v))}\,.$$
	
		Then if for every $w\in \Egr^0$ we choose $\beta_w\in \Egr^{n+1}$ with $r(\beta_w)=w$, and because of $\sum_{e\in v\Egr^1w}\varphi(1,e)=\Bsf_{v,w}$, we have that 
		\begin{align*}
        \varphi^\Bsf_{n,\infty}\left( \sum_{v\in\Egr^0} \lambda_v \Ind_v\right) &= \Phi_{\Bsf}([f])=\Phi_{\Bsf}\left(\left[  \sum_{v\in \Egr^0}\lambda_v \Ind_{Z(\alpha_v,1,\alpha_v;Z(\alpha_v))}\right] \right) 
        \\ & =\Phi_\Bsf\left(\left[  \sum_{v\in \Egr^0}\lambda_v \sum_{w\in \Egr^0}\sum_{e\in v\Egr^1w} \Ind_{Z(\alpha_v\kappa_1(e),\varphi(1,e),\alpha_ve;Z(\alpha_ve))}\right]\right) \\
           & =\Phi_\Bsf\left(\sum_{v\in \Egr^0}\lambda_v \sum_{w\in \Egr^0}\sum_{e\in v\Egr^1w}\left[ \Ind_{Z(\beta_w,\varphi(1,e),\beta_w;Z(\beta_w))}\right]\right) \\
                      & =\Phi_\Bsf\left(\sum_{v\in \Egr^0}\lambda_v \sum_{w\in \Egr^0}\sum_{e\in v\Egr^1w}\varphi(1,e)\left[  \Ind_{Z(\beta_w,1,\beta_w;Z(\beta_w))}\right]\right) \\
                                            & =\Phi_\Bsf\left(\sum_{v\in \Egr^0}\lambda_v \sum_{w\in \Egr^0}\Bsf_{v,w}\left[  \Ind_{Z(\beta_w,1,\beta_w;Z(\beta_w))}\right]\right) \\
                                            &=\varphi^\Bsf_{n+1,\infty}\left(\sum_{v\in \Egr^0}\lambda_v \sum_{w\in \Egr^0}\Bsf_{v,w}  \Ind_w\right) \,.
		\end{align*}

		Now let $\alpha\in\Egr^n$ and let $m\in\ZZ$. Then $Z(\alpha,m,\alpha;Z(\alpha))=Z(\alpha,0,\alpha;Z(\alpha))$ if and only if for every $x\in Z(r(\alpha))$ there exists $k\in\NN$ such that $\Bsf_{x_{|k}}=0$ if and only if $\varphi^\Bsf_{n,\infty}(\Ind_{r(\alpha)})=0$.
		Therefore, $\Phi_\Bsf$ is a well-defined map. Clearly $\Phi_\Bsf$ is an surjective map, and injectivity follows since by the above argument the inverse map $\Phi_\Bsf^{-1}$ is also well-defined. 
\end{proof}

Now using Lemma \ref{exact_seq} we have the following long exact sequence
\begin{equation}\label{SES_Hom}
\xymatrix {  0 &  H_0(\G_{\Asf,\Bsf})\ar@{>}[l] & H_0(\G_{\Asf,\Bsf}\times_\rho\ZZ) \ar@{>}[l] & H_0(\G_{\Asf,\Bsf}\times_\rho\ZZ)\ar@{>}[l]_{\Ide-\hat{\rho}^1} &  \ar@{>}[l]\\ 
	{}  &  H_1(\G_{\Asf,\Bsf})\ar@{>}[l] & H_1(\G_{\Asf,\Bsf}\times_\rho\ZZ) \ar@{>}[l] & H_1(\G_{\Asf,\Bsf}\times_\rho\ZZ)\ar@{>}[l]_{\Ide-\hat{\rho}^1} & H_{2}(\G_{\Asf,\Bsf})\ar@{>}[l] & 0\ar@{>}[l] \,,}
\end{equation}
and $H_i(\G_{\Asf,\Bsf})=0$ for $i\geq 3$.

It is then enough to describe the action $\hat{\rho}:\ZZ \curvearrowright H_i(\G_{\Asf,\Bsf}\times_\rho\ZZ))$ for $i=0,1$. Observe that $Y:=\G_{\Asf,\Bsf}^{(0)}\times\{0\}$ is a full open subset, and  that $(\G_{\Asf,\Bsf}\times_\rho\ZZ)_{Y}\cong \HG_{\Asf,\Bsf}$. Then for every $x\in H_i(\G_{\Asf,\Bsf}\times_\rho\ZZ)$ there exists $f\in C_c(\HG^{(i)}_{\Asf,\Bsf},\ZZ)$ such that $[f]=x$ in  $H_i(\G_{\Asf,\Bsf}\times_\rho\ZZ)$, so the assignment $x\to [f]$ gives the group isomorphism $\Psi:H_i(\G_{\Asf}\times_\rho\ZZ)\to H_i(\HG_{\Asf,\Bsf})$. Then the action $\tilde{\rho}:\ZZ\curvearrowright H_i(\HG_{\Asf,\Bsf})$ is defined as the unique action that makes the diagram 
$$\xymatrix {   H_i(\G_{\Asf,\Bsf}\times_\rho\ZZ)\ar@{>}[r]^{ \hat{\rho}^1}\ar@{>}[d]_{\Psi} &  H_i(\G_{\Asf,\Bsf}\times_\rho\ZZ)\ar@{>}[d]_{\Psi}\\
           H_i(\HG_{\Asf,\Bsf})\ar@{>}[r]^{\tilde{\rho}^1 } & H_i(\HG_{\Asf,\Bsf}) }$$
commutative.

\begin{proposition}\label{trans_map}
Let $N\in \NN\cup\{\infty\}$, and let $\Asf$ and $\Bsf$ be a pair of pseudo free $N\times N$  row-finite matrices with integer entries, and such that $\Asf_{i,j}\geq 0$ for all $i$ and $j$.  Then $\Phi_\Asf\circ\tilde{\rho}^1\circ\Phi_\Asf^{-1}:\ZZ_\Asf\to\ZZ_\Asf$ is given by $\varphi^\Asf_{i,\infty}(x))\mapsto\varphi^\Asf_{i+1,\infty}(x)$ for every $x\in\ZZ^N$. Moreover, $\Phi_\Bsf\circ\tilde{\rho}^1\circ\Phi_\Bsf^{-1}:\ZZ_\Bsf\to\ZZ_\Bsf$ is given by $\varphi^\Bsf_{i,\infty}(x)\mapsto\varphi^\Bsf_{i+1,\infty}(x)$ for every $x\in\ZZ^N$. 
\end{proposition}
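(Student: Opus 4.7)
The plan is to unwind the definition of $\tilde\rho^1$ at the chain level and, for each generator of $H_0(\HG_{\Asf,\Bsf})$ and $H_1(\HG_{\Asf,\Bsf})$ identified in Propositions~\ref{Hom1} and~\ref{hom2}, exhibit an explicit representative of its $\tilde\rho^1$-image that is supported on $(\G_{\Asf,\Bsf}\times_\rho\ZZ)_Y \cong \HG_{\Asf,\Bsf}$. The recurring trick is to transport a cycle sitting at $\ZZ$-level~$1$ in the skew product back down to $\ZZ$-level~$0$ by composing with a bisection of $\rho$-value~$1$.

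For the $H_0$ claim, fix $\alpha \in \Egr^n$ with $v := r(\alpha)$, so that $c := [\Ind_{Z(\alpha,0,\alpha;Z(\alpha))}]$ represents $\varphi^\Asf_{n,\infty}(\Ind_v)$ under $\Phi_\Asf$. Lifting $c$ to $H_0(\G_{\Asf,\Bsf}\times_\rho\ZZ)$ through $\Psi^{-1}$ gives $[\Ind_{Z(\alpha)\times\{0\}}]$, and $\hat\rho^1$ sends it to $[\Ind_{Z(\alpha)\times\{1\}}]$. Now pick any $\gamma \in \Egr^{n+1}$ with $r(\gamma)=v$; the bisection $W := Z(\gamma,0,\alpha;Z(\alpha))\times\{0\}$ has source $Z(\alpha)\times\{1\}$ (because $\rho[\gamma,0,\alpha;\cdot]=1$) and range $Z(\gamma)\times\{0\}$, so $\delta_1(\Ind_W) = \Ind_{Z(\alpha)\times\{1\}} - \Ind_{Z(\gamma)\times\{0\}}$. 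Hence $[\Ind_{Z(\alpha)\times\{1\}}] = [\Ind_{Z(\gamma)\times\{0\}}]$, the right-hand side is supported on $Y$, and applying $\Psi$ and then $\Phi_\Asf$ produces $\varphi^\Asf_{n+1,\infty}(\Ind_v)$ as desired.

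For the $H_1$ claim, the analogous lift and shift produce the cycle $V' := Z(\alpha,1,\alpha;Z(\alpha))\times\{1\}$, whose source and range both lie in $\G_{\Asf,\Bsf}^{(0)}\times\{1\}$. With the same $\gamma$ and $U := Z(\gamma,0,\alpha;Z(\alpha))\times\{0\}$, a direct product computation in $\SAB$ shows $UV'U^{-1} = Z(\gamma,1,\gamma;Z(\gamma))\times\{0\}$, which lives in $(\G_{\Asf,\Bsf}\times_\rho\ZZ)_Y$. To see $[\Ind_{V'}] = [\Ind_{UV'U^{-1}}]$ in $H_1$, I will combine the three $\delta_2$-identities
\begin{align*}
\delta_2(\Ind_{(U,V')})&=\Ind_{V'}-\Ind_{UV'}+\Ind_U,\\
\delta_2(\Ind_{(UV',U^{-1})})&=\Ind_{U^{-1}}-\Ind_{UV'U^{-1}}+\Ind_{UV'},\\
\delta_2(\Ind_{(U,U^{-1})})&=\Ind_{U^{-1}}-\Ind_{r(U)}+\Ind_U,
\end{align*}
adding the first two and subtracting the third to exhibit $\Ind_{V'}-\Ind_{UV'U^{-1}}+\Ind_{r(U)}$ as a $\delta_2$-boundary. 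Since $\Ind_{r(U)}$ is supported on units it is itself the $\delta_2$-image of the characteristic function of the diagonal $\{(u,u):u \in r(U)\}\subseteq(\G_{\Asf,\Bsf}\times_\rho\ZZ)^{(2)}$, and hence vanishes in $H_1$. Pushing $[\Ind_{UV'U^{-1}}]$ back through $\Psi$ and applying $\Phi_\Bsf$ finally yields $\varphi^\Bsf_{n+1,\infty}(\Ind_v)$.

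The main obstacle is the $H_1$ conjugation step: each pair's composability must be verified level by level in the skew product, and the explicit identification $UV'U^{-1} = Z(\gamma,1,\gamma;Z(\gamma))\times\{0\}$ requires careful unwinding of the multiplication on $\SAB$ together with the equivalence relation $(\alpha,m,\beta;x)\sim(\alpha\kappa_m(\gamma),\varphi(m,\gamma),\beta\gamma;x)$. With the formula established on each generator $\Ind_v$, $\ZZ$-linearity of $\tilde\rho^1$ extends it to all of $\ZZ^N$, and independence from the choice of $\gamma$ is automatic since $\tilde\rho^1$ is a well-defined endomorphism of $H_i(\HG_{\Asf,\Bsf})$.
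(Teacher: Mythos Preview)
Your argument is correct and is essentially the paper's own proof. For $H_0$ you reproduce the paper's bisection trick verbatim (the paper takes $\gamma=\beta\alpha$ with $\beta\in\Egr^1$, $r(\beta)=s(\alpha)$, which is just a particular choice of your $\gamma$). For $H_1$ the paper writes down four explicit functions $f_1,f_2,f_3,f_4$ whose $\delta_2$-image is the desired difference; unwinding them, one finds $f_1=\Ind_{(V',U^{-1})}$, $f_2=\Ind_{(U,V'U^{-1})}$, $f_3=\Ind_{(U,U^{-1})}$ and $f_4$ the diagonal on $r(U)$, i.e.\ the paper brackets the conjugation as $U\cdot(V'U^{-1})$ whereas you bracket it as $(UV')\cdot U^{-1}$. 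Your packaging as a general ``conjugation by a bisection preserves the $H_1$-class'' lemma is a bit more transparent, but the content is identical.
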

\begin{proof}
	First recall that the homeomorphism  $\hat{\rho}^1:\G_{\Asf,\Bsf}\times_{\rho}\ZZ\to \G_{\Asf,\Bsf}\times_{\rho}\ZZ$ is given by $g\times\{k\}\to g\times\{k+1\}$ for $g\in \G_{\Asf,\Bsf}$ and $k\in\ZZ$. Now let $Z(\alpha,m,\beta;Z(\beta))\times\{0\}$ be a clopen bisection of $(\G_{\Asf,\Bsf}\times_\rho\ZZ)_{Y}\cong \HG_{\Asf,\Bsf}$, then $\hat{\rho}^1(Z(\alpha,m,\beta;Z(\beta))\times\{0\})=Z(\alpha,m,\beta;Z(\beta))\times\{1\}\subseteq \G_{\Asf,\Bsf}\times_{\rho}\ZZ$, so the induced map 
	$$\hat{\rho}^1:C_c(\HG_{\Asf,\Bsf},\ZZ)\to C_c(\G_{\Asf,\Bsf}\times_{\rho}\ZZ,\ZZ)\qquad\text{is given by}\qquad \Ind_{Z(\alpha,m,\beta;Z(\beta))\times\{0\}}\mapsto \Ind_{Z(\alpha,m,\beta;Z(\beta))\times\{1\}}\,.$$
	Thus, we need to find the equivalent function of $\Ind_{Z(\alpha,m,\beta;Z(\beta))\times\{1\}}$ in $C_c(\HG_{\Asf,\Bsf},\ZZ)$. 
	First observe that 
	$$\hat{\rho}^1:C_c(\HG^{(0)}_{\Asf,\Bsf},\ZZ)\to C_c((\G_{\Asf,\Bsf}\times_{\rho}\ZZ)^{(0)},\ZZ)\qquad\text{is given by}\qquad \Ind_{Z(\alpha,0,\alpha;Z(\alpha))\times\{0\}}\mapsto \Ind_{Z(\alpha,0,\alpha;Z(\alpha))\times\{1\}}\,,$$
	and that given any  $\beta\in \Egr^1$ with $s(\alpha)=r(\beta)$ we have that 
	$$\delta_1(\Ind_{Z(\beta\alpha,0,\alpha;Z(\alpha))\times\{0\}})=\Ind_{Z(\alpha,0,\alpha;Z(\alpha))\times\{1\}}-\Ind_{Z(\beta\alpha,0,\beta\alpha;Z(\alpha))\times\{0\}}\,,$$ 
	so $\left[ \Ind_{Z(\alpha,0,\alpha;Z(\alpha))\times\{1\}}\right] =\left[ \Ind_{Z(\beta\alpha,0,\beta\alpha;Z(\alpha))\times\{0\}}\right]$ in $H_0(\G_{\Asf,\Bsf}\times_{\rho}\ZZ)$. Then,
	
	\begin{align*}
	\tilde{\rho}^1 (\Phi_\Asf^{-1}(\varphi^\Asf_{i,\infty}(\Ind_v)))& = \hat{\rho} \left(\left[  \Ind_{Z(\alpha,0,\alpha;Z(\alpha))\times\{0\}} \right]\right)  = \left[  \Ind_{Z(\alpha,0,\alpha;Z(\alpha))\times\{1\}} \right] \\ & =  \left[ \Ind_{Z(\beta\alpha,0,\beta\alpha;Z(\beta\alpha))\times\{0\}}\right] = \Phi^{-1}_\Asf\left( \varphi^\Asf_{i+1,\infty}(\Ind_v)\right) \,,
	\end{align*} 
	as desired.
	
	Now, on the other hand given any $\alpha\in \Egr^n$ and any  $\beta\in \Egr^1$ with $r(\beta)=s(\alpha)$, we can define the functions in $C_c((\G_{\Asf,\Bsf}\times_\rho\ZZ)^{(2)},\ZZ)$
	$$f_1=\Ind_{(Z(\alpha,1,\alpha;Z(\alpha))\times\{1\})\times (Z(\alpha,0,\beta\alpha;Z(\beta\alpha))\times\{1\})\cap (\G_{\Asf,\Bsf}\times_\rho\ZZ)^{(2)}}\,,$$
	$$f_2=\Ind_{(Z(\beta\alpha,0,\alpha;Z(\alpha))\times\{0\})\times (Z(\alpha,1,\beta\alpha;Z(\beta\alpha))\times\{1\})\cap(\G_{\Asf,\Bsf}\times_\rho\ZZ)^{(2)}}\,,$$
	$$f_3=\Ind_{(Z(\beta\alpha,0,\alpha;Z(\alpha))\times\{0\})\times (Z(\alpha,0,\beta\alpha;Z(\beta\alpha))\times\{1\})\cap(\G_{\Asf,\Bsf}\times_\rho\ZZ)^{(2)}}\,,$$
	$$f_4=\Ind_{(Z(\beta\alpha,0,\beta\alpha;Z(\beta\alpha))\times\{0\})\times Z(\beta\alpha,0,\beta\alpha;Z(\beta\alpha))\times\{0\}\cap(\G_{\Asf,\Bsf}\times_\rho\ZZ)^{(2)}}\,,$$
that satisfy
	$$\delta_2(f_1+f_2-f_3-f_4)=\Ind_{Z(\alpha,1,\alpha;Z(\alpha))\times\{1\}}-\Ind_{Z(\beta\alpha,1,\beta\alpha;Z(\beta\alpha))\times\{0\}}\,.$$
	
	Then,
	\begin{align*}
	\tilde{\rho}^1 (\Phi^{-1}_\Bsf\left( \varphi^\Bsf_{i,\infty}(\Ind_v)\right) )& =  \hat{\rho}^1 \left(\left[  \Ind_{Z(\alpha,1,\alpha;Z(\alpha))\times\{0\}} \right]\right)  = \left[ \Ind_{Z(\alpha,1,\alpha;Z(\alpha))\times\{1\}}\right] \\
	&  = \left[ \Ind_{Z(\beta\alpha,1,\beta\alpha;Z(\beta\alpha))\times\{0\}}\right] = \Phi_\Bsf^{-1}\left( \varphi^\Bsf_{i+1,\infty}(\Ind_v)\right) \,,
	\end{align*} 
	as desired.

\end{proof}

\begin{theorem}\label{HK_conj}
Let $N\in \NN\cup\{\infty\}$, and let $\Asf$ and $\Bsf$ be a pair of pseudo free $N\times N$  row-finite matrices with integer entries, and such that $\Asf_{i,j}\geq 0$ for all $i$ and $j$.  Then 
\begin{align*}
H_0(\G_{\Asf,\Bsf})& \cong \coker (\Ide-\Asf) & \qquad & H_1(\G_{\Asf,\Bsf}) \cong \ker(\Ide-\Asf)\oplus \coker (\Ide-\Bsf) \\
H_2(\G_{\Asf,\Bsf})& \cong \ker (\Ide-\Bsf)\,, &\qquad & H_{i}(\G_{\Asf,\Bsf}) =0\text{ for }i\geq 3\,.
\end{align*}
Therefore, $\G_{\Asf,\Bsf}$ satisfies the $(HK)$ conjecture.
\end{theorem}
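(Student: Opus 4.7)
The plan is to apply the long exact sequence of Lemma \ref{exact_seq} to the cocycle $\rho: \G_{\Asf,\Bsf} \to \ZZ$. By Lemma \ref{Morita}, $\G_{\Asf,\Bsf} \times_\rho \ZZ$ is Kakutani equivalent to $\HG_{\Asf,\Bsf}$, so by \cite[Theorem 3.6(2)]{Mat2} their homology groups agree in every degree. Combined with the vanishing $H_i(\HG_{\Asf,\Bsf}) = 0$ for $i \geq 2$ from Lemma \ref{lemma_HG}, the long exact sequence collapses: for $n \geq 3$ all four adjacent terms vanish, forcing $H_n(\G_{\Asf,\Bsf}) = 0$, while for $n \leq 2$ what survives is
$$0 \to H_2(\G_{\Asf,\Bsf}) \to H_1(\HG_{\Asf,\Bsf}) \xrightarrow{\Ide - \tilde{\rho}^1} H_1(\HG_{\Asf,\Bsf}) \to H_1(\G_{\Asf,\Bsf}) \to H_0(\HG_{\Asf,\Bsf}) \xrightarrow{\Ide - \tilde{\rho}^1} H_0(\HG_{\Asf,\Bsf}) \to H_0(\G_{\Asf,\Bsf}) \to 0.$$

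Next I would substitute Propositions \ref{Hom1} and \ref{hom2} to replace $H_0(\HG_{\Asf,\Bsf})$ by $\ZZ_\Asf$ and $H_1(\HG_{\Asf,\Bsf})$ by $\ZZ_\Bsf$, and use Proposition \ref{trans_map} to describe $\Ide - \tilde{\rho}^1$ explicitly: on $\ZZ_\Asf$ it sends $\varphi^\Asf_{i,\infty}(x)$ to $\varphi^\Asf_{i+1,\infty}((\Asf - \Ide)x)$, and analogously for $\Bsf$. The main technical step is then to identify the kernel and cokernel of these maps on the inductive limit. One checks that $\tilde{\rho}^1$ is an automorphism of $\ZZ_\Asf$ (its inverse sends $\varphi^\Asf_{i,\infty}(x)$ to $\varphi^\Asf_{i,\infty}(\Asf x)$), so $\Ide - \tilde{\rho}^1$ has the same kernel and cokernel as $\Ide - (\tilde{\rho}^1)^{-1}$, and the latter is induced at each level of the inductive system by $\Ide - \Asf: \ZZ^{\Egr^0} \to \ZZ^{\Egr^0}$. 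Exactness of filtered colimits in the category of abelian groups, together with the observation that the transition map $\Asf$ acts as the identity on both $\ker(\Ide - \Asf)$ and $\coker(\Ide - \Asf)$, then gives
$$\ker(\Ide - \tilde{\rho}^1 \mid \ZZ_\Asf) \cong \ker(\Ide - \Asf), \qquad \coker(\Ide - \tilde{\rho}^1 \mid \ZZ_\Asf) \cong \coker(\Ide - \Asf),$$
and the analogous identifications with $\Bsf$ in place of $\Asf$.

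Plugging these in reads off $H_0(\G_{\Asf,\Bsf}) \cong \coker(\Ide - \Asf)$ and $H_2(\G_{\Asf,\Bsf}) \cong \ker(\Ide - \Bsf)$ directly, and extracts from the middle of the sequence the short exact sequence
$$0 \to \coker(\Ide - \Bsf) \to H_1(\G_{\Asf,\Bsf}) \to \ker(\Ide - \Asf) \to 0.$$
Since $\ker(\Ide - \Asf)$ is a subgroup of the free abelian group $\bigoplus_{v \in \Egr^0} \ZZ$ it is itself free, hence projective, so this sequence splits and $H_1(\G_{\Asf,\Bsf}) \cong \ker(\Ide - \Asf) \oplus \coker(\Ide - \Bsf)$. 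Finally, comparing $H_0(\G_{\Asf,\Bsf}) \oplus H_2(\G_{\Asf,\Bsf})$ with $K_0(C^*(\G_{\Asf,\Bsf}))$ and $H_1(\G_{\Asf,\Bsf})$ with $K_1(C^*(\G_{\Asf,\Bsf}))$ in Katsura's formulas recalled in the introduction verifies the (HK) conjecture. The point that I expect to need the most care is the kernel/cokernel identification on the inductive limits; once $\tilde{\rho}^1$ is seen to be invertible everything reduces via exactness of filtered colimits to the single-level calculation, and the rest is bookkeeping.
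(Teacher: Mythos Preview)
Your proposal is correct and follows essentially the same route as the paper: apply Lemma~\ref{exact_seq}, use Lemma~\ref{Morita} and Lemma~\ref{lemma_HG} to collapse the long exact sequence, identify $H_0(\HG_{\Asf,\Bsf})$ and $H_1(\HG_{\Asf,\Bsf})$ via Propositions~\ref{Hom1}, \ref{hom2} and \ref{trans_map}, compute the kernel and cokernel of $\Ide-\tilde{\rho}^1$, and split the resulting short exact sequence for $H_1$ using freeness of $\ker(\Ide-\Asf)$. The only difference is that for the kernel/cokernel identification the paper simply cites \cite[Lemma~7.15]{Rae}, whereas you spell out the argument (invert $\tilde{\rho}^1$, reduce to the level-wise map $\Ide-\Asf$, and use exactness of filtered colimits together with the fact that $\Asf$ acts trivially on $\ker(\Ide-\Asf)$ and $\coker(\Ide-\Asf)$); this is precisely the content of that lemma.
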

\begin{proof}
	By Lemma \ref{exact_seq} we have the long exact sequence 
	$$\xymatrix {  0   &  H_0(\G_{\Asf,\Bsf})\ar@{>}[l] & H_0(\G_{\Asf,\Bsf}\times_\rho\ZZ) \ar@{>}[l] & H_0(\G_{\Asf,\Bsf}\times_\rho\ZZ)\ar@{>}[l]_{\Ide-\hat{\rho}^1} & H_1(\G_{\Asf,\Bsf})\ar@{>}[l] & \cdots\ar@{>}[l] \\ \cdots   &  H_n(\G_{\Asf,\Bsf})\ar@{>}[l] & H_n(\G_{\Asf,\Bsf}\times_\rho\ZZ) \ar@{>}[l] & H_n(\G_{\Asf,\Bsf}\times_\rho\ZZ)\ar@{>}[l]_{\Ide-\hat{\rho}^1} & H_{n+1}(\G_{\Asf,\Bsf})\ar@{>}[l] & \cdots\ar@{>}[l] }\,,$$
	where $\hat{\rho}^1$ is the induced map by the action $\hat{\rho}:\ZZ\curvearrowright\G\times_\rho \ZZ$.
	Since by Lemma \ref{Morita} the groupoids $\HG_{\Asf,\Bsf}$ and $\G_{\Asf,\Bsf}\times_\rho\ZZ$  are Kakutani equivalent, then Lemma \ref{lemma_HG} says that  $H_i(\G_{\Asf,\Bsf}\times_\rho\ZZ)=0$ for $i\geq 2$. Then we have the following long exact sequence 
	$$\xymatrix {  0   &  H_0(\G_{\Asf,\Bsf})\ar@{>}[l] & H_0(\G_{\Asf,\Bsf}\times_\rho\ZZ) \ar@{>}[l] & H_0(\G_{\Asf,\Bsf}\times_\rho\ZZ)\ar@{>}[l]_{\Ide-\hat{\rho}^1} & \ar@{>}[l] \\ 
		{}   &  H_1(\G_{\Asf,\Bsf})\ar@{>}[l] & H_1(\G_{\Asf,\Bsf}\times_\rho\ZZ) \ar@{>}[l] & H_1(\G_{\Asf,\Bsf}\times_\rho\ZZ)\ar@{>}[l]_{\Ide-\hat{\rho}^1} & H_{2}(\G_{\Asf,\Bsf})\ar@{>}[l] & 0\ar@{>}[l] }\,,$$
	and $H_i(\G_{\Asf,\Bsf})=0$ for $i\geq 3$. But by Proposition \ref{trans_map} and \cite[Lemma 7.15]{Rae} we have that that $$\ker(\Ide-\hat{\rho}^1:H_0(\G_{\Asf,\Bsf}\times_\rho\ZZ)\to H_0(\G_{\Asf,\Bsf}\times_\rho\ZZ))\cong \ker (\Ide-\Asf)\,,$$ 
	$$\coker(\Ide-\hat{\rho}^1:H_0(\G_{\Asf,\Bsf}\times_\rho\ZZ)\to H_0(\G_{\Asf,\Bsf}\times_\rho\ZZ))\cong \coker (\Ide-\Asf)\,,$$ 
	$$\ker(\Ide-\hat{\rho}^1:H_1(\G_{\Asf,\Bsf}\times_\rho\ZZ)\to H_1(\G_{\Asf,\Bsf}\times_\rho\ZZ))\cong \ker (\Ide-\Bsf)\,,$$ 
	$$\coker(\Ide-\hat{\rho}^1:H_1(\G_{\Asf,\Bsf}\times_\rho\ZZ)\to H_1(\G_{\Asf,\Bsf}\times_\rho\ZZ))\cong \coker (\Ide-\Bsf)\,.$$

	Since $\ker (\Ide-\Asf)$ and $\ker (\Ide-\Bsf)$ are free abelian groups, then the exact sequence splits in the short exact sequences 
	$$0\longrightarrow\coker(\Ide-\Asf)\longrightarrow  H_0(\G_{\Asf,\Bsf})\longrightarrow  0\,,$$   
	$$0\longrightarrow \coker (\Ide-\Bsf)\longrightarrow H_1(\G_{\Asf,\Bsf})\longrightarrow \ker(\Ide-\Asf)\longrightarrow 0\,,$$
	$$0\longrightarrow H_2(\G_{\Asf,\Bsf})\longrightarrow \ker(\Ide-\Bsf)\longrightarrow 0\,,$$   
	as desired.

\end{proof}

\begin{remark} We would like to point out that the exact sequences at the end of the proof of Theorem \ref{HK_conj} are the same that one gets when using the spectral sequence described in \cite{Mat2} as this was our initial strategy. But now our proof uses a more primitive but intuitive method in homological algebra, without an extra cost in the computations.     
\end{remark}

\begin{corollary}
Let $N,N'\in \NN$, and let $\Asf\,,\Bsf\in M_N(\ZZ)$ and $\Asf'\,,\Bsf'\in M_{N'}(\ZZ)$ pseudo free pairs of matrices, such that $\Asf_{i,j},\Asf'_{i,j}\geq 0$ for all $i$ and $j$.  Suppose that $\G_{\Asf,\Bsf}$ and $\G_{\Asf',\Bsf'}$ are Kakutani equivalent. Then $\ker(\Ide-\Asf)\cong \ker(\Ide-\Asf')$ and $\ker(\Ide-\Bsf)\cong \ker(\Ide-\Bsf')$. 
\end{corollary}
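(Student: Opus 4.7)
The plan is to combine the explicit homology computation in Theorem \ref{HK_conj} with the Kakutani invariance of groupoid homology \cite[Theorem 3.6(2)]{Mat2}. Since $\G_{\Asf,\Bsf}$ and $\G_{\Asf',\Bsf'}$ are Kakutani equivalent, one obtains $H_i(\G_{\Asf,\Bsf}) \cong H_i(\G_{\Asf',\Bsf'})$ for every $i\geq 0$. The degree $2$ identification from Theorem \ref{HK_conj} is the cleanest: it reads $H_2(\G_{\Asf,\Bsf}) \cong \ker(\Ide-\Bsf)$, so the isomorphism $\ker(\Ide-\Bsf) \cong \ker(\Ide-\Bsf')$ falls out immediately.

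For the $\Asf$-kernel one should not argue directly from $H_1$, because
$$H_1(\G_{\Asf,\Bsf}) \cong \ker(\Ide-\Asf) \oplus \coker(\Ide-\Bsf)$$
mixes the wanted kernel with a possibly nontrivial cokernel, and the Kakutani isomorphism does not come with a canonical splitting between the two summands. The workaround is to descend to $H_0$: the identification $H_0(\G_{\Asf,\Bsf}) \cong \coker(\Ide-\Asf)$ combined with Kakutani invariance gives $\coker(\Ide-\Asf) \cong \coker(\Ide-\Asf')$, and in particular these cokernels have the same free rank over $\ZZ$.

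Now invoke the elementary fact, provable by Smith normal form, that for any square integer matrix $M$ acting on $\ZZ^N$ (with $N<\infty$) one has $\operatorname{rank}_{\ZZ}\ker M = \operatorname{rank}_{\ZZ}\coker M$. Applied to $\Ide-\Asf$ and $\Ide-\Asf'$, the equality of cokernel ranks forces $\operatorname{rank}_{\ZZ}\ker(\Ide-\Asf) = \operatorname{rank}_{\ZZ}\ker(\Ide-\Asf')$. Since both kernels are free abelian groups (as subgroups of $\ZZ^N$ and $\ZZ^{N'}$ respectively), they are determined up to isomorphism by their rank, and we conclude $\ker(\Ide-\Asf) \cong \ker(\Ide-\Asf')$. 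The conceptual step is recognizing that the $H_0$-layer, not $H_1$, is the right place to extract the $\Asf$-kernel information; after that the argument reduces to linear algebra over $\ZZ$, so no serious obstacle remains.
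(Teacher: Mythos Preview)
Your argument is correct; the paper states this corollary without proof, treating it as an immediate consequence of Theorem \ref{HK_conj} together with the Kakutani invariance of homology \cite[Theorem 3.6(2)]{Mat2}, and your use of $H_2$ for the $\Bsf$-kernel and of $H_0$ plus the Smith-normal-form rank identity for the $\Asf$-kernel is a clean way to fill in the details. One could equivalently read off $\operatorname{rank}\ker(\Ide-\Asf)=\operatorname{rank}H_1-\operatorname{rank}H_2$ (using the same rank identity for $\Ide-\Bsf$), but either route yields the same conclusion.
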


\begin{example}
Let $\Asf=(2)$ and $\Bsf=(1)$, and let 
$$\Asf'=\left( \begin{array}{cc} 2 & 1 \\ 1 & 2 \end{array}\right) \,,$$
then we have that $\G_{\Asf,\Bsf}$ and $\G_{\Asf'}$ (the SFT-groupoid \cite{Mat3}) are minimal, Hausdorff, effective and purely infinite étale groupoids with compact unit space, and 
$$K_0(C^*(\G_{\Asf,\Bsf}))=K_0(C^*(\G_{\Asf'}))\cong \ZZ\qquad\text{and}\qquad K_1(C^*(\G_{\Asf,\Bsf}))=K_1(C^*(\G_{\Asf'}))\cong \ZZ\,,$$
so $\mathcal{O}_{\Asf,\Bsf}$ and $\mathcal{O}_{\Asf'}$ are stable isomorphic. But then by Theorem \ref{HK_conj} we have that
$$H_0(\G_{\Asf,\Bsf})=0\,,\qquad H_1(\G_{\Asf,\Bsf})\cong\ZZ\qquad\text{and}\qquad H_2(\G_{\Asf,\Bsf})\cong\ZZ\,,$$
while
$$H_0(\G_{\Asf'})\cong \ZZ\,,\qquad H_1(\G_{\Asf'})\cong\ZZ\qquad\text{and}\qquad H_2(\G_{\Asf'})=0\,,$$
see \cite[Theorem 4.14]{Mat2}, and therefore $\G_{\Asf,\Bsf}$ and $\G_{\Asf'}$ cannot be Kakutani equivalent.  In particular, does not exists any diagonal preserving isomorphism between the stabilizations of $\mathcal{O}_{\Asf,\Bsf}$ and $\mathcal{O}_{\Asf'}$ (see for example \cite[Theorem 3.12]{FKPS}). 

 In a private correspondence, Enrique Pardo showed me  how to prove using \cite{CEP} that the isotropy groups of $\G_{\Asf,\Bsf}$ are isomorphic either to $0$ or $\ZZ$. Therefore, homology is the invariant that distinguishes the equivalence classes of these groupoids.  
\end{example}

\section{Final remarks}

 In this final section we will use the previous computations  on the homology of the groupoid $\G_{\Asf,\Bsf}$ to give examples of groupoids with prescribed homology and satisfying the (HK) conjecture, whose associated groupoid $C^*$-algebra falls in a classifiable class.

 \begin{lemma}\label{principal}
 	Let $N\in \NN\cup\{\infty\}$, and let $\Asf$ and $\Bsf$ be two $N\times N$ row-finite matrices with integer entries, and such that $\Asf_{i,j}\geq 0$ for all $i$ and $j$, and $|\Bsf_{i,j}|<\Asf_{i,j}$ for every $(i,j)\in\Omega_\Asf$ and $\Egr$ is acyclic. Then the groupoid  $\G_{\Asf,\Bsf}$ is principal,
 \end{lemma}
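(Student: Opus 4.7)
The plan is to take an arbitrary isotropy element $g = [\alpha, m, \beta; x] \in \G_{\Asf,\Bsf}$ (that is, one satisfying $s(g)=r(g)$) and argue it must be a unit. Writing $x = \beta y$ with $y \in \Egr^\infty$, the source/range formulas turn $s(g)=r(g)$ into
$$\beta y \;=\; \alpha\,\kappa_m(y).$$
I split into the cases $|\alpha|\neq|\beta|$ and $|\alpha|=|\beta|$; acyclicity of $\Egr$ rules out the first, and the numerical bound $|\Bsf_{i,j}|<\Asf_{i,j}$ trivializes the second.

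For the different-length case, assume $|\alpha|>|\beta|$ (the reverse case follows by passing to $g^{-1}$), and write $\alpha = \beta\gamma$ with $\gamma=\gamma_1\cdots\gamma_k$, $k\geq 1$. The isotropy equation becomes $y = \gamma\,\kappa_m(y)$. Since $\kappa_m$ sends each edge $e_{i,j,n}$ to an edge $e_{i,j,\cdot}$ with the same source $i$ and range $j$, the infinite path $\kappa_m(y)$ has the same underlying vertex trajectory as $y$. Comparing the $(k+i)$-th letter of $y$ with the $i$-th letter of $\kappa_m(y)$ forces $s(y_{k+i})=s(y_i)$ and $r(y_{k+i})=r(y_i)$ for every $i\geq 1$. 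In particular $r(y_k) = s(y_{k+1}) = s(y_1)$, so $y_1\cdots y_k$ is a cycle of length $k\geq 1$ in $\Egr$, contradicting acyclicity.

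For the equal-length case, $\alpha=\beta$ (both are initial segments of $x$ of the same length), so the isotropy equation reduces to $\kappa_m(y)=y$. Define $m_0 := m$ and $m_n := \varphi(m,y_{|n})$. The condition $\kappa_{m_{n-1}}(y_n)=y_n$ unfolds from the definition of $\kappa$ to $m_{n-1}\,\Bsf_{y_n} = m_n\,\Asf_{y_n}$. Since $(s(y_n),r(y_n))\in\Omega_\Asf$, the assumption $|\Bsf_{y_n}|<\Asf_{y_n}$ gives $|m_n|<|m_{n-1}|$ whenever $m_{n-1}\neq 0$, so $(|m_n|)_{n\geq 0}$ is a strictly decreasing sequence of non-negative integers and must hit $0$ at some finite $n_0$. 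Since then $\kappa_m(y_{|n_0})=y_{|n_0}$ and $\varphi(m,y_{|n_0})=0$, the equivalence relation defining $\G_{\Asf,\Bsf}$ gives
$$[\alpha,m,\alpha;x] \;=\; [\alpha\,\kappa_m(y_{|n_0}),\,\varphi(m,y_{|n_0}),\,\alpha y_{|n_0};x] \;=\; [\alpha y_{|n_0},0,\alpha y_{|n_0};x],$$
which lies in $\G_{\Asf,\Bsf}^{(0)}$. The main subtlety is in this last case: one must observe that $|\Bsf_{i,j}|<\Asf_{i,j}$ yields the strict contraction $|m_n|<|m_{n-1}|$, and that the one-cocycle identity propagates the fixed-point condition from $y$ to every finite prefix $y_{|n}$, so that the equivalence relation is actually applicable to absorb the exponent $m$.
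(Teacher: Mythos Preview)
Your proof is correct and follows the same two-case outline as the paper: acyclicity of $\Egr$ forces $\alpha=\beta$, and then the bound $|\Bsf_{i,j}|<\Asf_{i,j}$ disposes of the fixed-point equation $\kappa_m(y)=y$. The only real difference is in the second step: the paper invokes \cite[Lemma~18.4]{EP} (the criterion $\kappa_m(x)=x \Leftrightarrow m\,\Bsf_{x_{|l}}/\Asf_{x_{|l}}\in\ZZ$ for every $l$) and observes this ratio tends to $0$, whereas you unwind the recursion directly, showing the integers $m_n=\varphi(m,y_{|n})$ strictly decrease in absolute value until they vanish and then absorbing $m$ via the defining equivalence relation; your version is thus self-contained and does not need the external citation. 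As a minor shortcut in your first case, note that $r(\alpha)=r(\beta)$ is already built into the definition of $\SAB$, so once $\alpha=\beta\gamma$ with $|\gamma|\geq 1$ you immediately get $s(\gamma)=r(\beta)=r(\alpha)=r(\gamma)$, making $\gamma$ a cycle without the vertex-trajectory comparison.
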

 \begin{proof}
 	   Let $g=[\alpha,m,\beta;\beta x]\in\G_{\Asf,\Bsf}$ with $r(g)=s(g)=\beta x$.
 	   Since $\Egr$ is acyclic we can assume that $\alpha=\beta$. Then $r(g)=s(g)=\beta x$ if and only if $\kappa_m(x)=x$. Now by \cite[Lemma 18.4]{EP}  given $x\in\Egr^\infty$  and $m\in\ZZ$, $\kappa_m(x)=x$ if and only if $m\frac{\Bsf_{x_{|l}}}{\Asf_{x_{|l}}}\in\ZZ$ for every $l\in\NN$. But then by hypothesis it is clear that for every $x\in\Egr^\infty$  and $m\in\ZZ$ there exists $l\in\NN$ such that $m\frac{\Bsf_{x_{|l}}}{\Asf_{x_{|l}}}\notin \ZZ$.
 \end{proof}

 By a Bratteli diagram $(V,E)$, we mean a vertex set $V$, which is the union of finite non-empty sets $V_0,V_1,\ldots$, with $V_0=\{v_0\}$, and edge set $E$, which is the union of finite non-empty sets $E_1,E_2,\ldots$, such that the source and range maps restrict $s:E_n\to V_{n-1}$ and $r:E_n\to V_n$  for $n\geq 1$. In particular, a Bratteli diagram is a directed graph such that the associated incidence matrix $\Asf$ is row-finite. Moreover, if $\Asf$ is the incidence matrix of the Bratteli diagram we have that $\coker (\Ide-\Asf)$ is the $K_0$ of the associated $AF$-algebra and $\ker(\Ide-\Asf)=0$.
 
 \begin{remark}\label{dim_group}
Given a simple, acyclic dimension group $G_0$, and any dimension group $G_1$ one can find Bratteli diagrams $(V,E)$ and $(W,F)$ such that the associated $AF$-algebras have $K_0$ groups $G_0$ and $G_1$ respectively. Since $G_0$ is a simple dimension group, we can assume that between every vertex at some level  $V_n$ and any other vertex at $V_{n+1}$ there exists at least one edge. Let $\Asf$ be the adjacency matrix of  $(V,E)$ and let $\Bsf$ be the adjacency matrix of $(W,F)$. Telescoping and out-splitting $(V,E)$ we can assume that $|\Bsf_{i,j}|<\Asf_{i,j}$ for every $(i,j)\in\Omega_\Asf$ (see for example \cite[page 1368]{Put}).   
 \end{remark}
 
\begin{proposition}\label{prop_AT}
Let $G_0$ be a simple, acyclic  dimension group, and let $G_1$ be any dimension group. Then there exist $N\in\NN\cup\{\infty\}$ and $N\times N$ row-finite matrices $\Asf$ and $\Bsf$ with natural entries, such that $\G_{\Asf,\Bsf}$ is an amenable, Hausdorff,  principal,  minimal étale groupoid with
$$H_0(\G_{\Asf,\Bsf})\cong K_0(C^*(\G_{\Asf,\Bsf}))\cong  G_0\,,\qquad H_1(\G_{\Asf,\Bsf})\cong K_1(C^*(\G_{\Asf,\Bsf}))\cong G_1\,,$$
and $H_i(\G_{\Asf,\Bsf})=0$ for $i\geq 2$.
In particular, $C^*(\G_{\Asf,\Bsf})$ is a simple  $A\mathbb{T}$-algebra.
\end{proposition}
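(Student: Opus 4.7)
The plan is to construct $\Asf$ and $\Bsf$ using Remark \ref{dim_group}, verify the structural hypotheses of the Katsura-Exel-Pardo setup, apply Theorem \ref{HK_conj} to read off the homology and $K$-theory, and finally identify the $C^*$-algebra as a simple $A\mathbb{T}$-algebra via classification.

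First, Remark \ref{dim_group} provides Bratteli diagrams $(V,E)$ and $(W,F)$ on a common level structure whose associated $AF$-algebras realize $G_0$ and $G_1$ as their $K_0$-groups, with incidence matrices $\Asf$ and $\Bsf$ having natural entries and satisfying $|\Bsf_{i,j}|<\Asf_{i,j}$ on $\Omega_\Asf$. After further telescoping or out-splitting of $(W,F)$, one may also arrange $\Bsf_{i,j}\neq 0$ whenever $\Asf_{i,j}\neq 0$; combined with the fact that both incidence matrices are supported only between consecutive Bratteli levels, this yields $\Bsf_{i,j}=0$ if and only if $\Asf_{i,j}=0$, so that the pair $(\Asf,\Bsf)$ is pseudo free.

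With the matrices in hand, the structural properties of $\G_{\Asf,\Bsf}$ follow: étalness, amenability, and the topology of the unit space are part of the general properties of Katsura-Exel-Pardo groupoids, Hausdorffness is immediate from pseudo-freeness, and Lemma \ref{principal} gives that $\G_{\Asf,\Bsf}$ is principal since the Bratteli graph $\Egr$ is acyclic and $|\Bsf_{i,j}|<\Asf_{i,j}$ on $\Omega_\Asf$. Minimality follows from the observation that the tail-equivalence subgroupoid of $\Egr^\infty$, which sits inside $\G_{\Asf,\Bsf}$ as the set of elements $[\alpha,0,\beta;x]$ with $|\alpha|=|\beta|$ and $r(\alpha)=r(\beta)$, is already minimal on $\Egr^\infty$ by the simplicity of the Bratteli diagram $(V,E)$.

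Homology is then computed by Theorem \ref{HK_conj}. For incidence matrices of Bratteli diagrams one has $\ker(\Ide-\Asf)=\ker(\Ide-\Bsf)=0$, while $\coker(\Ide-\Asf)\cong G_0$ and $\coker(\Ide-\Bsf)\cong G_1$ are the $K_0$-groups of the associated $AF$-algebras. Hence $H_0\cong G_0$, $H_1\cong G_1$, $H_2=0$, and $H_i=0$ for $i\geq 3$, and the $(HK)$-identification yields $K_0(C^*(\G_{\Asf,\Bsf}))\cong G_0$ and $K_1(C^*(\G_{\Asf,\Bsf}))\cong G_1$. For the $A\mathbb{T}$-algebra conclusion, $C^*(\G_{\Asf,\Bsf})$ is simple, separable, and nuclear with Elliott invariant matching that of a simple $A\mathbb{T}$-algebra with $K$-theory $(G_0,G_1)$; one then appeals either to Elliott's classification (after checking the tracial data) or to a Putnam-style inductive construction compatible with the underlying Bratteli structure. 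This last identification is where I expect the main obstacle, since it requires either verifying the hypotheses of a classification theorem or explicitly building an $A\mathbb{T}$ inductive presentation.
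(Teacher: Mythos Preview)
Your setup (construction of $\Asf,\Bsf$ via Remark \ref{dim_group}, pseudo-freeness, principality via Lemma \ref{principal}, minimality, and the homology computation via Theorem \ref{HK_conj}) matches the paper's argument almost exactly. The one place you add something the paper glosses over is the explicit arrangement of $\Bsf_{i,j}\neq 0$ on $\Omega_\Asf$ to force pseudo-freeness; this is a reasonable remark.

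The genuine gap is exactly where you flag it: the $A\mathbb{T}$ conclusion. You gesture at Elliott classification or a ``Putnam-style inductive construction'' without carrying either out, and the classification route would require you to identify the order structure and tracial data, which (as the remark following the proposition in the paper makes explicit) is not established here. The paper avoids this entirely by a direct and elementary argument: using the Bratteli level decomposition $V=\bigsqcup_n V_n$, it defines open subgroupoids
\[
\G_{\Asf,\Bsf,n}=\{[\alpha,m,\beta;x]\in\G_{\Asf,\Bsf}:r(\alpha)=r(\beta)\in V_n\},
\]
so that $\G_{\Asf,\Bsf}=\bigcup_n\G_{\Asf,\Bsf,n}$, and observes that
\[
C^*(\G_{\Asf,\Bsf,n})\cong \bigoplus_{v\in V_n} M_{n_v}\bigl(C(\mathrm{spec}(\un_v))\bigr)\otimes C(Z(v)),
\]
where $\un_v=\Ind_{Z(v,1,v)}$ and $n_v$ counts paths into $v$; each summand is visibly a circle algebra. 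An appeal to \cite[Proposition 1.9]{Phil} shows these are genuine $C^*$-subalgebras with $C^*(\G_{\Asf,\Bsf})=\overline{\bigcup_n C^*(\G_{\Asf,\Bsf,n})}$, so $C^*(\G_{\Asf,\Bsf})$ is an inductive limit of $A\mathbb{T}$-algebras and hence $A\mathbb{T}$. This is precisely the ``explicit $A\mathbb{T}$ inductive presentation'' you allude to but do not construct; it is short and requires no classification machinery.
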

\begin{proof}
Let us consider $\Asf$ and $\Bsf$ as explained in Remark  \ref{dim_group}. Then $\G_{\Asf,\Bsf}$ is an amenable, Hausdorff and minimal groupoid groupoid \cite[Section 18]{EP}, and by Lemma \ref{principal} it is also principal.  The homology is computed in Theorem \ref{HK_conj}, so we only need to see that $C^*(\G_{\Asf,\Bsf})$ is an $A\mathbb{T}$-algebra. Let $(V,E)$ be the Bratteli diagram with incidence matrix $\Asf$, and let $V=\bigsqcup_{i\geq  0} V_i$ be the level decomposition of the diagram. Then  given $n\in\NN$ we define
$$\G_{\Asf,\Bsf,n}=\{[\alpha,n,\beta;x]\in\G_{\Asf,\Bsf}:r(\alpha)=r(\beta)\in V_n\}\,,$$
with the subspace topology. It is an open subgroupoid of $\G_{\Asf,\Bsf}$ and we have that $\G_{\Asf,\Bsf}=\bigcup_{n=0}^\infty \G_{\Asf,\Bsf,n}$. Given $v\in\Egr^0$, let $\un_v$ be the partial unitary $\Ind_{Z(v,1,v)}\in C^*(\G_{\Asf,\Bsf})$. Then we have that $C^*(\G_{\Asf,\Bsf,n})\cong \bigoplus_{v\in V_n}M_{n_v}(C(\text{spec}(\un_v)))\otimes C(Z(v))$ where $n_v=\sharp\{\alpha\in \Egr:r(\alpha)=v\}$, which is an $A\mathbb{T}$-algebra. Then by \cite[Proposition 1.9]{Phil} we have that $C^*(\G_{\Asf,\Bsf,n})$ is a subalgebra of $C^*(\G_{\Asf,\Bsf})$, and hence $C^*(\G_{\Asf,\Bsf})=\bar{\bigcup_{n\geq 0}C^*(\G_{\Asf,\Bsf,n})}$, whence $C^*(\G_{\Asf,\Bsf})$ is an $A\mathbb{T}$-algebra.

\end{proof}

\begin{remark}
The groupoids in Proposition \ref{prop_AT} and \cite{Put} look very similar in the way they are constructed. However, the author does not know  whether $K_0(C^*(\G_{\Asf,\Bsf}))$ and $G_0$ are isomorphic as ordered groups.  The map $\lambda:H_0(\G_{\Asf,\Bsf})\to K_0(C^*(\G_{\Asf,\Bsf}))$ given by $[\Ind_{Z(v)}]\to [p_v]$ for every $v\in\Egr^0$, is a group isomorphism (\cite[Proposition 2.6]{Kat}), but is unclear if it is an isomorphism of ordered groups. 
\end{remark} 
 

For the rest of the section we will assume that $\Asf$ and $\Bsf$ are the incidence matrices of two Bratteli diagrams $(V,E)$ and $(W,F)$ respectively, defined in Remark \ref{dim_group}, satisfying that  $|\Bsf_{i,j}|<\Asf_{i,j}$ for every $(i,j)\in\Omega_\Asf$. 
 
In general, the unit space of the groupoid $\G_{\Asf,\Bsf}$ is not compact, and hence $C^*(\G_{\Asf,\Bsf})$ is not a unital $C^*$-algebra. We can define the groupoid $\tilde{\G}_{\Asf,\Bsf}:=(\G_{\Asf,\Bsf})_{Z(v_0)}$ where $v_0$ is the initial vertex of the Bratteli diagram $(V,E)$. Then the groupoid $\tilde{\G}_{\Asf,\Bsf}$ is  amenable, Hausdorff, principal, minimal and étale, and has a compact unit space homeomorphic to $Z(v_0)$. Moreover, since $Z(v_0)\subseteq \G_{\Asf,\Bsf}^{(0)}$ is  $\G_{\Asf,\Bsf}$-full, we have that $\G_{\Asf,\Bsf}$ and $\tilde{\G}_{\Asf,\Bsf}$ are Kakutani equivalent, whence $H_i(\G_{\Asf,\Bsf})\cong H_i(\tilde{\G}_{\Asf,\Bsf})$ for $i\geq 0$, and $C^*(\tilde{\G}_{\Asf,\Bsf})$ is a unital $A\mathbb{T}$-algebra Morita equivalent to $C^*(\G_{\Asf,\Bsf})$.

Given a groupoid $\G$ with compact unit space $\G^{(0)}$, we denote by $M(\G)$ the set of probability measures $\mu$ of $\G^{(0)}$ such that given any bisection $U\subseteq G$ we have that $\mu(s(U))=\mu(r(U))$.

 \begin{lemma}
	Let $\Asf$ and $\Bsf$ be the incidence matrices of two Bratteli diagrams $(V,E)$ and $(W,F)$ respectively, satisfying that  $|\Bsf_{i,j}|<\Asf_{i,j}$ for every $(i,j)\in\Omega_\Asf$. Then $M(\tilde{\G}_{\Asf,\Bsf})=M(\tilde{\G}_{\Asf,0})$.
\end{lemma}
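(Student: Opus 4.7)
The plan is to reduce both invariance conditions to a common family of equalities $\mu(Z(\alpha)) = \mu(Z(\beta))$ by inspecting the source and range of basic bisections. Both $\tilde{\G}_{\Asf,\Bsf}$ and $\tilde{\G}_{\Asf,0}$ share the same compact unit space $Z(v_0)$, and by the description recalled in Section~2 the compact open sets $Z(\alpha,m,\beta;Z(\beta))$ (with $\alpha,\beta$ finite paths starting at $v_0$ and $r(\alpha)=r(\beta)$) form a basis of bisections for the topology of $\tilde{\G}_{\Asf,\Bsf}$; for $\tilde{\G}_{\Asf,0}$ one can take $m=0$, since in that case the action $\kappa$ and the cocycle $\varphi$ are trivial. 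Since the condition defining $M(\G)$ is local on a basis of bisections, it suffices to verify $\mu(s(B))=\mu(r(B))$ on these basic sets in each groupoid.

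The first substantive step is the computation of source and range. From the formulas for $\tilde{\G}_{\Asf,\Bsf}$ one reads off $s(Z(\alpha,m,\beta;Z(\beta)))=Z(\beta)$ and
\[ r(Z(\alpha,m,\beta;Z(\beta))) = \{\alpha\kappa_m(y) : y\in Z(r(\beta))\} = \alpha\cdot\kappa_m(Z(r(\beta))). \]
The key observation to establish is that $\kappa_m(Z(w))=Z(w)$ for every $w\in\Egr^0$: by definition, $\kappa_m$ sends each edge $e_{i,j,n}$ to an edge $e_{i,j,l}$ with the same source $i$ and range $j$, and the inductive extension to longer and infinite paths preserves sources vertex-by-vertex, so $\kappa_m$ restricts to a homeomorphism of each cylinder $Z(w)$. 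Combined with the relation $r(\alpha)=r(\beta)$, this yields $r(Z(\alpha,m,\beta;Z(\beta)))=Z(\alpha)$, independently of $m$.

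With this simplification, the invariance condition for $\tilde{\G}_{\Asf,\Bsf}$ collapses to $\mu(Z(\beta))=\mu(Z(\alpha))$ for all finite paths $\alpha,\beta$ from $v_0$ with $r(\alpha)=r(\beta)$, which is exactly the invariance condition for $\tilde{\G}_{\Asf,0}$ coming from the $m=0$ basic bisections. Hence both measure cones are cut out by the same family of constraints on $\mu$ and must coincide. The only non-bookkeeping point is the vertex-preserving property of $\kappa$, but this is immediate from the definition of $\kappa_m(e_{i,j,n})=e_{i,j,l}$ and the recursive extension formula, so there is no real obstacle.
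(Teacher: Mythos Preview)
Your argument is correct and rests on the same observation as the paper's: since $\kappa_m$ preserves the source and range of every edge, it restricts to a bijection of each cylinder $Z(w)$, which forces $r(Z(\alpha,m,\beta;Z(\beta)))=Z(\alpha)$ independently of $m$. The paper phrases this as showing that any $\eta\in M(\tilde{\G}_{\Asf,0})$ satisfies $\eta(Z(\alpha))=\eta(Z(\kappa_m(\alpha)))$ via the bisection $Z(\kappa_m(\alpha),0,\alpha;Z(\alpha))\subseteq\tilde{\G}_{\Asf,0}$, leaving the passage to full $\tilde{\G}_{\Asf,\Bsf}$-invariance implicit; your direct computation of the source and range of the basic bisections makes that passage explicit and treats both inclusions at once.
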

\begin{proof}
Clearly $M(\tilde{\G}_{\Asf,\Bsf})\supseteq M(\tilde{\G}_{\Asf,0})$. On the other hand given $\eta\in M(\tilde{\G}_{\Asf,0})$, $\alpha\in \Egr^*$ with $s(\alpha)=v_0$, and $m\in\ZZ$, we have that $\eta(Z(\alpha))=\eta(Z(\kappa_m(\alpha)))$ because the bisection $U=Z(\kappa_m(\alpha),0,\alpha;Z(\alpha))\subseteq \tilde{\G}_{\Asf,0}$ is such $s(U)=Z(\alpha)$ and $r(U)=Z(\kappa_m(\alpha))$. 
\end{proof}

 Every $\mu\in M(\G)$ induces a trace  $\mu\circ E$ on $C_r^*(\G)$ (viewing $\mu$ as a state of $C(\G^{(0)})$),  where $E:C_r^*(\G)\to C(\G^{(0)})$ is the canonical conditional expectation.  Moreover, if $\G$ is a principal groupoid every trace $\tau$ of $C_r^*(\G)$ satisfies $\tau\circ E=\tau$ (see \cite[Lemma 4.3]{LiRe} for example). Observe that given two different $\mu_1,\mu_2\in M(\G)$ induce two different traces $\mu_1\circ E$ and $\mu_2\circ E$ of $C^*_r(\G)$. Therefore, we have bijection between $M(\G)$ and $T(C^*_r(\G))$, the traces of $C^*_r(\G)$. 
 
 Then given row-finite matrices $\Asf$ and $\Bsf$, there is a bijection between $T(C^*(\tilde{\G}_{\Asf,0}))$ and $M(\tilde{\G}_{\Asf,\Bsf})$.  But $C^*(\tilde{\G}_{\Asf,0})$ is a simple unital $AF$-algebra, and hence by \cite{Bla} and Proposition \ref{prop_AT} for each metrizable Choquet simplex $\Delta$ there exists $\Asf$ such that $T(C^*(\tilde{\G}_{\Asf,0}))$ is homeomorphic to $\Delta$.

 Finally, we present a last example of a minimal, purely infinite étale groupoid with a prescribed homology. The example covers partially the result of Li and Renault \cite[Lemma 5.5]{LiRe}.
 
 \begin{proposition}
 Let $G_0$ be a simple, acyclic dimension group, and let $G_1$ be any dimension group. Then there exist an amenable, Hausdorff,  effective, purely infinite, minimal étale  groupoid  $\G$ with unit space homeomorphic to the Cantor space and isotropy groups isomorphic either to $0$ or to $\ZZ$, that satisfies the (HK) conjecture, and with 
 $$K_0(C^*(\G))\cong H_0(\G)\cong G_0\qquad\text{and}\qquad K_1(C^*(\G))\cong H_1(\G)\cong G_1\,.$$
 \end{proposition}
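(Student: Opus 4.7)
The plan is to produce a pair of matrices $\Asf,\Bsf$ whose associated Katsura-Exel-Pardo groupoid, after restriction to a compact clopen subset of its unit space, realizes the desired invariants and structure. I would search for row-finite matrices $\Asf\geq 0$ and $\Bsf\in M_N(\ZZ)$ satisfying $|\Bsf_{i,j}|<\Asf_{i,j}$ for every $(i,j)\in\Omega_\Asf$, so that the pair is pseudo-free by \cite[Lemma~18.5]{EP}, with $\coker(\Ide-\Asf)\cong G_0$, $\coker(\Ide-\Bsf)\cong G_1$, and both $\ker(\Ide-\Asf)$ and $\ker(\Ide-\Bsf)$ trivial. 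The cokernel identifications can be arranged via infinite matrices modelled on the Bratteli diagrams from Remark~\ref{dim_group}; the vanishing of the kernels is then automatic from the one-way level structure (any finitely supported $x$ with $\Asf x=x$ is forced to vanish at the deepest level of its support and hence everywhere).

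Given such matrices, I would form $\G_{\Asf,\Bsf}$ and restrict to the clopen set $Z(v_0)$ exactly as in Proposition~\ref{prop_AT}, obtaining a groupoid $\tilde{\G}_{\Asf,\Bsf}$ with compact, totally disconnected, perfect (hence Cantor) unit space that is Kakutani equivalent to $\G_{\Asf,\Bsf}$, and therefore shares its homology. Theorem~\ref{HK_conj} then yields $H_0(\tilde{\G}_{\Asf,\Bsf})\cong G_0$, $H_1(\tilde{\G}_{\Asf,\Bsf})\cong G_1$, $H_2(\tilde{\G}_{\Asf,\Bsf})=0$, and $H_i(\tilde{\G}_{\Asf,\Bsf})=0$ for $i\geq 3$, so the (HK) statement reduces to $K_0(C^*(\tilde{\G}_{\Asf,\Bsf}))\cong H_0\cong G_0$ and $K_1(C^*(\tilde{\G}_{\Asf,\Bsf}))\cong H_1\cong G_1$, which matches Katsura's $K$-theory formula. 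Amenability and Hausdorffness are inherited from pseudo-freeness; effectiveness follows from the condition that every circuit in $\Egr$ has an exit together with $\Bsf_{x_{|n}}/\Asf_{x_{|n}}\to 0$ (guaranteed along every path by $|\Bsf|<\Asf$); and the isotropy dichotomy between $0$ and $\ZZ$ follows from the analysis of $\kappa$-fixed points used in the example following Theorem~\ref{HK_conj}, via \cite{CEP}.

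The genuine obstacle is securing minimality and pure infiniteness for $\tilde{\G}_{\Asf,\Bsf}$, because the pure Bratteli matrices of Proposition~\ref{prop_AT} are acyclic and produce merely $A\mathbb{T}$ groupoids. To overcome this I would modify the Bratteli diagrams of Remark~\ref{dim_group} combinatorially: telescope further and add extra edges introducing cycles with exits at every vertex, so that the resulting adjacency matrix $\Asf$ becomes irreducible and not a permutation matrix in the sense of property~(4) recorded before Theorem~\ref{HK_conj}, while preserving the inductive limit $\coker(\Ide-\Asf)\cong G_0$. The analogous modification must be lifted to $\Bsf$ while maintaining $|\Bsf_{i,j}|<\Asf_{i,j}$ and $\coker(\Ide-\Bsf)\cong G_1$. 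Once the matrices are produced, properties~(3) and (4) from the Exel-Pardo list imply the remaining effectiveness, minimality and pure infiniteness, so the construction delivers a groupoid with all the listed attributes and finishes the proof.
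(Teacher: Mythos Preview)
Your approach diverges from the paper's at the decisive step, and the divergence hides the real work. The paper does not try to make $\Asf$ irreducible at all. Instead it keeps the acyclic Bratteli matrices of Proposition~\ref{prop_AT}, takes the principal groupoid $\tilde{\G}_{\Asf,\Bsf}$ they produce, and then forms the \emph{product} $\G:=\tilde{\G}_{\Asf,\Bsf}\times\G_\infty$, where $\G_\infty$ is any graph groupoid with $C^*(\G_\infty)\cong\mathcal{O}_\infty$. Pure infiniteness, minimality, effectiveness and the isotropy dichotomy are supplied by the $\G_\infty$ factor, and the homology is read off from Matui's K\"unneth-type results \cite[Theorems~2.4 and 2.8]{Mat4} together with $H_0(\G_\infty)\cong\ZZ$, $H_i(\G_\infty)=0$ for $i\geq 1$. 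This sidesteps entirely the problem you identify as the ``genuine obstacle''.

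Your alternative---adding cycles to the Bratteli diagrams to force irreducibility while preserving $\coker(\Ide-\Asf)\cong G_0$ and $\coker(\Ide-\Bsf)\cong G_1$---is not a routine telescoping move, and your proposal gives no mechanism for carrying it out. Two concrete issues: first, your argument that $\ker(\Ide-\Asf)=0$ relies explicitly on the one-way level structure (``vanish at the deepest level of its support''), and that structure is destroyed the moment you insert cycles, so you would need a fresh argument for the vanishing of both kernels; second, arranging irreducibility of $\Asf$ while simultaneously controlling $\coker(\Ide-\Asf)$, $\coker(\Ide-\Bsf)$, the inequality $|\Bsf_{i,j}|<\Asf_{i,j}$, and the effectiveness condition~(3b) is exactly the kind of range-of-invariant construction that Katsura had to work hard to achieve in \cite{Kat,Kat2}, and you have not indicated how to do it for the specific dimension groups $G_0,G_1$. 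As written, the last paragraph of your proposal is a restatement of the problem rather than a solution to it.
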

\begin{proof}
Let $\Asf$ and $\Bsf$ be from Proposition \ref{prop_AT}, and let $\tilde{\G}_{\Asf,\Bsf}$, that is a principal étale groupoid, with  $\tilde{\G}_{\Asf,\Bsf}^{(0)}$ homeomorphic to the Cantor space, and  with $H_0(\tilde{\G}_{\Asf,\Bsf})\cong G_0$ and $H_1(\tilde{\G}_{\Asf,\Bsf})\cong G_1$. Now let $\G_\infty$ be any graph groupoid such that $C^*(\G_\infty)\cong \mathcal{O}_\infty$, that is an amenable, Hausdorff, minimal, effective and purely infinite étale groupoid, with $\G_\infty^{(0)}$ homeomorphic to the Cantor space and isotropy groups isomorphic to either $0$ or $\ZZ$. It is computed in \cite{NO} that $H_0(\G_\infty)\cong \ZZ$ and $H_i(\G_\infty)=0$ for $i\geq 1$. Then the groupoid $\G:=\tilde{\G}_{\Asf,\Bsf}\times \G_\infty$ is  an amenable, Hausdorff, minimal, effective and purely infinite, with $\G^{(0)}$ homeomorphic to the Cantor space and isotropy groups isomorphic to either $0$ or $\ZZ$, and  by \cite[Theorem 2.4 \& Theorem 2.8]{Mat4} the rest of the statement follows.  
\end{proof}

The groupoids constructed in the above Proposition have much simple isotropy groups than the general groupoids $\G_{\Asf,\Bsf}$ \cite{CEP}

\section*{Acknowledgments}
The author is grateful to Enrique Pardo for sharing private notes and fruitful conversations, and to Hiroki Matui for valuable feed-back from the first draft of the paper. Finally, the author wish to thank the referee for many useful suggestions that have
improved the paper.

\end{document}